\documentclass[11pt]{article}
\usepackage{amsfonts,amsmath,amsthm}

\usepackage[usenames]{color}
 
\pagestyle{myheadings}
\markboth{Draft copy\hfil  \today}{
Geometric ergodicity and the spectral gap
\hfil  Draft copy \today}

\usepackage[colorlinks,%
linkcolor=BrickRed,% 
filecolor =BrickRed,%
citecolor=RoyalPurple,%
]{hyperref}
%Definition must change do to  color links

\setlength{\oddsidemargin}{0cm}
\setlength{\evensidemargin}{0cm}
\setlength{\topmargin}{-.25cm}  % -2 works for ps but not dvi or pdf!
\setlength{\textheight}{21.5cm}
\setlength{\textwidth}{15.7cm}

%\input{/users/yiannis/latex/paper_defns} 

%%%%%%%%%%%%%%%%%%%%%%%%%%%%%%%%%%%%%%%%%%%%%%%%%%%%%%%%%%%%%%%%%%%%%%%%%%%
%%%%%%%%%%%%%%%%%%%%%%%%%%%%%%%%%%%%%%%%%%%%%%%%%%%%%%%%%%%%%%%%%%%%%%%%%%%
%%%%%%%%%%%%%%%%%%%%%%%%%%%  MY STUFF  %%%%%%%%%%%%%%%%%%%%%%%%%%%%%%%%%%%%
%%%%%%%%%%%%%%%%%%%%%%%%%%%%%%%%%%%%%%%%%%%%%%%%%%%%%%%%%%%%%%%%%%%%%%%%%%%
%%%%%%%%%%%%%%%%%%%%%%%%%%%%%%%%%%%%%%%%%%%%%%%%%%%%%%%%%%%%%%%%%%%%%%%%%%%

%\def\qed{\hfill$\Box$}

%  using Meyn's qed below  !

\newcommand{\Xp}{\mbox{\boldmath $X$}}

\newcommand{\IND}{{\mathbb I}}

\newcommand{\la}{\lambda}

\def\be{\begin{eqnarray}}
\def\ee{\end{eqnarray}}
\def\ben{\begin{eqnarray*}}
\def\een{\end{eqnarray*}}

% \newcommand{\product}{\mathop{\mbox{\huge $\prod$}}}

%%%%%%%%%%%%%%%%%%%%%%%%%%%%%%%%%%%%%%%%%%%%%%%%%%%%%%%%%%%%%%%%%%%%%%%%%%%
%%%%%%%%%%%%%%%%%%%%%%%%%%%%%%%%%%%%%%%%%%%%%%%%%%%%%%%%%%%%%%%%%%%%%%%%%%%
%%%%%%%%%%%%%%%%%%%%%%%%%%  SEAN's STUFF  %%%%%%%%%%%%%%%%%%%%%%%%%%%%%%%%%
%%%%%%%%%%%%%%%%%%%%%%%%%%%%%%%%%%%%%%%%%%%%%%%%%%%%%%%%%%%%%%%%%%%%%%%%%%%
%%%%%%%%%%%%%%%%%%%%%%%%%%%%%%%%%%%%%%%%%%%%%%%%%%%%%%%%%%%%%%%%%%%%%%%%%%%

%%%%%%%%%%%%%%%%%%%%% Symbols.tex

%%%%%NEW LABELS

  %for final copy

\def\elabel#1{\label{e:#1}}

%%%%%%%%%%%%%%%%%%%%% qed
%
 
%\def\sq{\hbox{\rlap{$\sqcap$}$\sqcup$}}
\def\sq{$\Box$}

\def\qed{\ifmmode\sq\else{\unskip\nobreak\hfil
\penalty50\hskip1em\null\nobreak\hfil\sq
\parfillskip=0pt\finalhyphendemerits=0\endgraf}\fi\par\medbreak}

%%%%%%%%%%%%%%%%%%%%%%%%%%%%%%%%% MATH OPERATORS

\newsavebox{\junk}
\savebox{\junk}[1.6mm]{\hbox{$|\!|\!|$}}
\def\lll{{\usebox{\junk}}}

\def\limsup{\mathop{\rm lim\ sup}}

%%%%%%%%%%%%%%%%%%%%%%%%%%%%%%%%%%%%%%% SPACES

\def\state{{\sf X}}

\newcommand{\field}[1]{\mathbb{#1}}

\def\nat{\field{Z}_+}

\def\Co{\field{C}}

\def\ind{\field{I}}

%%%%%%%%%%%%%%%%%%%%%%%%%%%%%%%%%%%%%% CHECKED CHARACTERS

%%%%%%%%%%%%%%%%%%%%%%%%%%%%%%%%%%%%%%%%%%%%%%%%%%%%%%%%%%%%%%%%%%%%%%%%%%

%%%%%%%%%%%%%%%%%%%%%%%%%%%%%%%%%%%%%%%%%%%%%%%%%%%%%%%%%%%%%%%%%%%%%%%%%% 

\newcommand{\one}{\hbox{\rm\large\textbf{1}}}

%%%%%%%%%%%%%%%%%%%%%%%%%%%%%% BF PREFIXES

% Bold Math

\def\bfPhi{\mbox{\protect\boldmath$\Phi$}}

% \ha prefixes

%  Wide hat

\def\haP{{\widehat P}}

%  Hat

% \rm prefixes

% \til prefixes

% Wide tilde

\def\til={{\widetilde =}}

\def\tilP{{\widetilde P}}

% Tilde

%Caligraphy

\def\clB{{\cal B}}

\def\clS{{\cal S}}

\def\half{{\mathchoice{\textstyle \frac{1}{2}}%
{\frac{1}{2}}%
{\hbox{\tiny $\frac{1}{2}$}}%
{\hbox{\tiny $\frac{1}{2}$}} }}

\def\eqdef{\mathbin{:=}}

\def\Expect{{\sf E}}

 \def\eq#1/{(\ref{#1})}

\def\epsy{\varepsilon}

\def\varble{\,\cdot\,}

%%%%%%%%%%THEOREMS and EQUATIONS

\newtheorem{theorem}{Theorem}[section]

\newtheorem{proposition}[theorem]{Proposition}
\newtheorem{lemma}[theorem]{Lemma}

\def\Lemma#1{Lemma~\ref{t:#1}}
\def\Proposition#1{Proposition~\ref{t:#1}}
\def\Theorem#1{Theorem~\ref{t:#1}}

\def\eq#1/{(\ref{e:#1})}

\newcommand{\beqn}[1]{\notes{#1}%
\begin{eqnarray} \elabel{#1}}

\newcommand{\eeqn}{\end{eqnarray} }

\newcommand{\beq}[1]{\notes{#1}%
\begin{equation}\elabel{#1}}

\newcommand{\eeq}{\end{equation}} 

\def\bdes{\begin{description}}
\def\edes{\end{description}}

%%%%%%%%%%%%%%%%%%%%%%%%%%%%  \bar prefixes (improved?)

% \def\proof{\noindent{\sc Proof. }} 
% \def\proof{\paragraph{\sc Proof. }} 
 
% \def\proofo{\paragraph{\sc Proof Outline. }} 

\def\notes#1{}

%%% NEW DEFNS

% \def\eV#1{$e^{\hbox{\small (V#1)}}$}

% \def\L{L_\infty}
\def\LV{L_\infty^V}

%%%%%%%%%%%%%%%%%%%%%%%%%%
%%%%% More definitions %%%%% %%%%% %%

%%%%%%%%%%%%%%%%%%%%%%%%%%

%Note:  Remove slabel to use the colorlinks macros

\def\FRAC#1#2#3{\genfrac{}{}{}{#1}{#2}{#3}}

\def\half{{\mathchoice{\FRAC{1}{1}{2}}%
{\FRAC{1}{1}{2}}%
{\FRAC{3}{1}{2}}%
{\FRAC{3}{1}{2}}}}

%%%%%%%%%%%%%%%%%%%

%%%%%%%%%%%%%%%%%%%

\newcounter{rmnum}
\newenvironment{romannum}{\begin{list}{{\upshape (\roman{rmnum})}}{\usecounter{rmnum}
\setlength{\leftmargin}{24pt}
\setlength{\rightmargin}{16pt}
\setlength{\itemindent}{-1pt}
}}{\end{list}}

\newcounter{anum}
   
\newenvironment{alphanum}{\begin{list}{{\upshape (\alph{anum})}}{\usecounter{anum}
\setlength{\leftmargin}{28pt}
\setlength{\rightmargin}{16pt}
\setlength{\itemindent}{0pt}
}}{\end{list}}

% Notes definitions %%%%%%%%%%%%%%%%%%%%%%%%%%%%%%%%%%%%%%%%% 

\newlength{\noteWidth}
\setlength{\noteWidth}{.7in}
\long\def\notes#1{\ifinner
             {\tiny #1}
             \else
             \marginpar{\parbox[t]{\noteWidth}{\raggedright\tiny #1}}
             \fi}
 
% \def\archival#1{\typeout{save this note: #1}}
% \def\done#1{\typeout{DONE! DONE! DONE!}\notes{\textbf{Done:}  #1}}   

%\def\notes#1{}  % For final version!
%\def\archival#1{} % For final version
%\def\done#1{}  % For final version!

%%%%%%%%%%%%%%%%%%%%%%%%%%%%%%%%%%%%%%%%%%%%%%%%%%%%%%%%%%%%%%%%%%%%%%%%%%%

\newcounter{tasks}

{\begin{quote}%
\begin{list}{\hspace{-.75cm}\hbox{\textrm{\rm (\roman{tasks})}\ }}{\usecounter{tasks}%
        \setlength{\labelsep}{0pt}
        \setlength{\leftmargin}{0pt}
        \setlength{\rightmargin}{0pt}
        \setlength{\labelwidth}{0pt}
        \setlength{\listparindent}{0pt}}}%
{\end{list}\end{quote}}

\newcounter{tasksA}

{\begin{quote}%
\begin{list}{\hspace{-.75cm}\hbox{\textrm{\rm (\alph{tasksA})}\ }}{\usecounter{tasksA}%
        \setlength{\labelsep}{0pt}
        \setlength{\leftmargin}{0pt}
        \setlength{\rightmargin}{0pt}
        \setlength{\labelwidth}{0pt}
        \setlength{\listparindent}{0pt}}}%
{\end{list}\end{quote}}
 
\begin{document}

\title{\vspace{-2cm}%
Geometric Ergodicity and the Spectral Gap\\
of Non-Reversible Markov Chains
}
\author{
      	I. Kontoyiannis\thanks{Department of Informatics,
		Athens University of Economics and Business,
		Patission 76, Athens 10434, Greece.
                Email: {\tt yiannis@aueb.gr}.
                 \newline          I.K. was supported in part
                by a Sloan Foundation Research Fellowship.}
\and
        S.P. Meyn\thanks{Department of Electrical and Computer 
                Engineering and the Coordinated Sciences Laboratory, 
                University of Illinois at Urbana-Champaign,  
                Urbana, IL 61801, USA. Email: {\tt meyn@uiuc.edu}.
                \newline
                S.P.M. was supported in part by the National Science Foundation
  ECS-0523620. Any opinions, findings,
  and conclusions or recommendations expressed in this material are
  those of the authors and do not necessarily reflect the views of the
  National Science Foundation.}
}

\maketitle
 
\begin{abstract} 
We argue that the spectral theory 
of non-reversible Markov chains 
may often be more effectively cast within
the framework of the naturally associated
weighted-$L_\infty$ space $L_\infty^V$, 
instead of the usual Hilbert 
space $L_2=L_2(\pi)$, where $\pi$ is
the invariant measure of the chain.
This observation is, in part,
based on the following results.
A discrete-time Markov chain with values
in a general state space is geometrically 
ergodic if and only if its transition kernel 
admits a spectral gap in $L_\infty^V$. 
If the chain is reversible, 
the same equivalence holds with $L_2$ in place 
of $L_\infty^V$, but in the absence of 
reversibility it fails: There are
(necessarily non-reversible, geometrically
ergodic) chains that admit a spectral gap 
in $L_\infty^V$ but not in $L_2$. Moreover,
if a chain admits a spectral gap in $L_2$,
then for any $h\in L_2$ there exists a Lyapunov
function $V_h\in L_1$ such that $V_h$ dominates $h$ 
and the chain admits a spectral gap 
in $L_\infty^{V_h}$. The relationship
between the size of the spectral gap
in $L_\infty^V$ or $L_2$, and the rate 
at which the chain converges to equilibrium 
is also briefly discussed.

% Convergence analysis and spectral theory for Markov chains 
% is frequently cast in a Hilbert space framework.   In this paper 
% we argue that a weighted $L_\infty$ space is more appropriate 
% when the Markov chain is not reversible.  
% 
% It is known that a Markov chain is geometrically ergodic 
% if and only if there is a function $V$ such that the 
% transition kernel admits a spectral gap in the corresponding 
% function space $\LV$.  In turn, this is equivalent to  
% $V$-uniformly ergodicity of  the Markov chain. 
% 
% The main results of the present paper are summarized as follows:
% \begin{romannum}
% \item
% If there is  a spectral gap in $L_2$ then  for any $h\in L_2$,    
% there is  a function $V_h\in L_2$ dominating $h$  such that 
% the chain is $V_h$-uniformly ergodic, and   admits a spectral 
% gap in $L_\infty^{V_h}$.
% \item
% We show that $V$-uniformly ergodicity alone 
% does not imply the existence of a spectral gap in  $L_2$.  
% \end{romannum}
 
\bigskip

\bigskip

{\small
\noindent
\textbf{Keywords:}  Markov chain, geometric ergodicity, 
spectral theory, stochastic Lyapunov function, 
reversibility, spectral gap.}

\bigskip

{\small
\noindent
\textbf{2000 AMS Subject Classification:}
60J05,	%    	Markov processes with discrete parameter
60J10,	%   	Markov chains with discrete parameter 
37A30,          % Ergodic theorems, spectral theory, Markov operators 
37A25 		% Ergodicity, mixing, rates of mixing
}

%60-XX  	 	 	Probability theory and stochastic processes {For additional applications, see 11Kxx, 62-XX, 90-XX, 91-XX, 92-XX, 93-XX, 94-XX}
%	60Jxx 		Markov processes
%		60J05   	Markov processes with discrete parameter
%		60J10   	Markov chains with discrete parameter
%		60J20   	Applications of discrete Markov processes (social mobility, learning theory, industrial processes, etc.) [See also 90B30, 91D10, 91D35, 91E40]
%		60J22   	Computational methods in Markov chains [See also 65C40]
%		60J25   	Markov processes with continuous parameter
%		60J27   	Markov chains with continuous parameter
%		60J35   	Transition functions, generators and resolvents [See also 47D03, 47D07]
%		60J40   	Right processes
%		60J45   	Probabilistic potential theory [See also 31Cxx, 31D05]
%		60J50   	Boundary theory
%		60J55   	Local time and additive functionals
%		60J57   	Multiplicative functionals
%		60J60   	Diffusion processes [See also 58J65]
%		60J65   	Brownian motion [See also 58J65]
%		60J70   	Applications of diffusion theory (population genetics, absorption problems, etc.) [See also 92Dxx]
%		60J75   	Jump processes
%		60J80   	Branching processes (Galton-Watson, birth-and-death, etc.)
%		60J85   	Applications of branching processes [See also 92Dxx]
%		60J99   	None of the above, but in this section

%37-XX Dynamical systems and ergodic theory  

%37H15 Multiplicative ergodic theory, Lyapunov exponents
%	 [See also 34D08, 37Axx, 37Cxx, 37Dxx]

%37A30    	Ergodic theorems, spectral theory, Markov operators 
%	{For operator ergodic theory, see mainly 47A35}

%37A25    	Ergodicity, mixing, rates of mixing

\end{abstract}

%\newpage  
%\tableofcontents

\thispagestyle{empty}

\newpage
 
\setcounter{page}{1}
\section{Introduction and Main Results}
%%%%%%%%%%%%%%%%%%%%%%%%%%%%%%%%%%%%%%%%%%%%%%%%%%%%%%%%%%%%

There is increasing interest in spectral theory and rates of convergence 
for Markov chains.  Research is motivated by elegant mathematics as well 
as a range of applications.
In particular, one of the most effective general
methodologies used to establish bounds on the convergence
rate of a geometrically ergodic chain is via an analysis
of the spectrum of the chain's transition kernel.
See, e.g.,
\cite{meyn-tweedie:94b,ros95a,ros95b,diaconis-SC:98,%
deljun99,kontoyiannis-meyn:I,huimeysch04a,kontoyiannis-meyn:II,%
winkler:book,roberts-rosenthal:97,brooks-roberts:98,%
bremaud:book,tetali:06,diaconis:07,%
diaconis:08,diaconis:09},
and the relevant references therein.

The word {\em spectrum} naturally invites techniques grounded in a 
Hilbert space framework. The majority of quantitative results 
on rates of convergence are obtained using such methods, 
within the Hilbert space $L_2=L_2(\pi)$, where $\pi$ denotes 
the stationary distribution of the Markov chain in question.
% and $L_2(\pi)$  is the set of complex-valued, measurable functions 
% on the state space that are square-integrable with respect to $\pi$.
Indeed, most successful studies have been carried out 
for Markov chains that are reversible, in which case
a key to analysis is the fact that
the transition kernel, viewed as a linear operator on $L_2$, 
is self-adjoint.  In this paper 
we argue that, in the {\em absence} of reversibility, the Hilbert space 
framework may not be the appropriate setting for
spectral analysis.

To be specific, let $\Xp=\{X(n)\;:\;n\geq 0\}$ denote a 
discrete-time Markov chain with values on a general
state space $\state$.  We assume that $\state$ 
is equipped with a countably generated sigma-algebra
$\clB$. The distribution of $\Xp$ 
is described by its initial
state $X(0)=x_0\in\state$ and
the transition semigroup
$\{P^n\,:\, n\geq 0\}$, where, for each $n$,  
\[
P^n(x,A)\eqdef\Pr\{X(n)\in A\,|\,X(0)=x\},
\quad x\in\state,\,A\in\clB.
\]
For simplicity we write $P$ 
for the one-step kernel $P^1$.
Recall that each $P^n$, like any
(not necessarily probabilistic)
kernel $Q(x,dy)$ acts on functions
$F:\state\to\Co$ and signed measures $\nu$
on $(\state,\clB)$, via,
\ben
QF(\cdot)=\int_{\state}Q(\cdot,dy)F(y)
\;\;\;\mbox{and}\;\;\;
\nu Q(\cdot)=\int_{\state}\nu(dx)Q(x,\cdot),
% \label{eq:act}
\een
whenever the integrals exist.
Throughout the paper, we assume that the chain $\Xp=\{X(n)\}$ 
is {\em $\psi$-irreducible and aperiodic}; cf.\
\cite{meyn-tweedie:book2,nummelin:book}. This means
that there is a $\sigma$-finite measure
$\psi$ on $(\state,\clB)$ such that, for any
$A\in\clB$ with $\psi(A)>0$, and any $x\in\state$,
$$P^n(x,A)>0,
\;\;\;\;\mbox{for all $n$ sufficiently large}.$$
Moreover, we assume that $\psi$ is {\em maximal}
in the sense that any other such $\psi'$ is absolutely
continuous with respect to $\psi$.

\subsection{Geometric ergodicity}
%%%%%%%%%%%%%%%%%%%%%%%%%%%%%%%%%%%%%%%%%%%%%%%%%
The natural class of chains to consider in the present
context is that of {\em geometrically ergodic} chains,
namely, chains with the property that there exists
an invariant measure $\pi$ on $(\state,\clB)$ and
functions $\rho:\state\to(0,1)$ and $C:\state\to[1,\infty)$,
such that,
$$\|P^n(x,\cdot)-\pi\|_{\rm TV}\leq C(x)\rho(x)^n,
\;\;\;\;\mbox{for all}\;n\geq 0,\;\pi\mbox{-a.e.}\;x\in\state,$$
where $\|\mu\|_{\rm TV}:=\sup_{A\in\clB}|\mu(A)|$
denotes the total variation norm on signed measures.
Under $\psi$-irreducibility and aperiodicity
this is equivalent \cite{meyn-tweedie:book2,roberts-rosenthal:97}
to the seemingly stronger
requirement that there is a single constant
$\rho\in(0,1)$, a constant $B<\infty$ and
a $\pi$-a.e.\ finite function $V:\state\to[1,\infty]$, 
such that,
\be
\|P^n(x,\cdot)-\pi\|_V\leq B\,V(x)\rho^n,
\;\;\;\;\mbox{for all}\;n\geq 0,\;\pi\mbox{-a.e.}\;x\in\state,
\label{eq:conv}
\ee
where $\|\mu\|_V:=\sup\{|\int F\,d\mu|\;:\;F\in L_\infty^V\}$
denotes the $V$-norm on signed measures,
and where $L_\infty^V$ denotes the weighted-$L_\infty$ 
space consisting of all measurable functions
$F\colon\state\to \Co$ with,
\begin{equation} 
\|F\|_V \eqdef \sup_{x\in\state}\frac{|F(x)|}{V(x)} < \infty\, .
\label{e:gV}
\end{equation}

Another equivalent and operationally simpler definition
of geometric ergodicity for a $\psi$-irreducible, aperiodic
chain $\Xp=\{X(n)\}$, is that it satisfies the following drift
criterion \cite{meyn-tweedie:book2}:
$$
\left. \begin{array}{ll}
   & \parbox{.6\hsize}{There is 
a function $V\colon\state\to[1,\infty]$,     
 a small set $C\subset\state$,
 \\
  and constants
                $\delta>0,\,$ $b<\infty$, such that:}\\
   & \\
   & \hspace{0.8in} PV\leq (1-\delta)V+b\IND_C\, .
 \end{array} \right\}
\hspace{0.7in}
\mbox{\bf (V4)}
$$ 
We then say that the chain is {\em geometrically ergodic
with Lyapunov function $V$}. In (V4) it is always 
assumed that the Lyapunov function $V$ is finite 
for at least one $x$ (and then it is necessarily 
finite $\psi$-a.e.).
Also, recall that a set
$C\in\clB$ is {\em small} if there exist $n\geq 1$, 
$\epsilon>0$ and a probability measure $\nu$ on 
$(\state,\clB)$ such that,
$P^n(x,A)\geq\epsilon\IND_C(x)\nu(A)$,
for all $x\in\state,$ $A\in\clB$.

Our first result relates geometric ergodicity
to the spectral properties of the kernel $P$.
Its proof, given at the end of Section~\ref{s:proofs},
is based on ideas from \cite{kontoyiannis-meyn:I}. 
See Section~\ref{s:spectrum} 
for more precise definitions.

\begin{proposition}
\label{t:equiv:V}
A $\psi$-irreducible and aperiodic Markov
chain $\Xp=\{X(n)\}$ is geometrically ergodic
with Lyapunov function $V$ if and only if 
$P$ admits a spectral gap in $L_\infty^V$.
\end{proposition}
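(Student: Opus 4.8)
The plan is to make the ``$V$-uniform ergodicity'' bound \eqref{eq:conv} the pivot between the two sides of the equivalence, and to phrase everything through the operators $\Pi:=\One\otimes\pi$ (that is, $\Pi F:=\One\int F\,d\pi$) and $Q:=P-\Pi$ on $L_\infty^V$. Since $P\One=\One$ and $\pi P=\pi$, one has the algebra $\Pi^2=\Pi$, $P\Pi=\Pi P=\Pi$, hence $Q\Pi=\Pi Q=0$ and $Q^n=P^n-\Pi$ for every $n\ge1$. A one-line computation then gives the exact identity
\[
\|Q^n\|_V = \sup_{x\in\state}\frac{\|P^n(x,\cdot)-\pi\|_V}{V(x)},\qquad n\ge 1,
\]
where $\|\cdot\|_V$ on the left is the operator norm on $L_\infty^V$. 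Thus the quantitative form of a spectral gap, namely $\|Q^n\|_V\le B_0\rho_0^n$ for some $B_0<\infty$ and $\rho_0<1$, is \emph{literally the same statement} as \eqref{eq:conv}; the content of the proposition is that this in turn is equivalent to (V4) with the same $V$, and that it genuinely produces a bona-fide spectral gap. (For the bookkeeping: under (V4), $PV\le(1-\delta)V+b\IND_C\le(1-\delta+b)V$, so $P$ is bounded on $L_\infty^V$, and $\pi(V)<\infty$ is a standard consequence of (V4), so $\Pi$ and $Q$ are bounded too; in the other direction boundedness of $P$ is part of the definition of a spectral gap, and $\pi(V)<\infty$ then follows by testing \eqref{eq:conv} on the truncations $V\wedge k$.)

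For ``geometric ergodicity $\Rightarrow$ spectral gap'' I would start from (V4) and invoke the $V$-uniform geometric ergodic theorem of Meyn and Tweedie (Theorem~16.0.1 of \cite{meyn-tweedie:book2}; see also \cite{roberts-rosenthal:97}), which supplies $R<\infty$, $r<1$ with $\|P^n(x,\cdot)-\pi\|_V\le RV(x)r^n$; by the identity above this reads $\|Q^n\|_V\le Rr^n$ for $n\ge1$, so the spectral radius of $Q$ on $L_\infty^V$ is at most $r<1$. Since $\Pi$ is a bounded idempotent with $P\Pi=\Pi$ and $Q\Pi=0$, the space $L_\infty^V$ decomposes into the $P$-invariant subspaces $\mathrm{range}\,\Pi=\Co\One$ and $\ker\Pi=\{F:\pi(F)=0\}$, on which $P$ acts as multiplication by $1$ and as $Q$, respectively; hence $\mathrm{spec}(P)\subseteq\{1\}\cup\mathrm{spec}(Q)\subseteq\{1\}\cup\{z:|z|\le r\}$, with $1$ a simple isolated eigenvalue whose associated Riesz projection is exactly $\Pi$. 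This is the assertion that $P$ admits a spectral gap in $L_\infty^V$. (Equivalently, one reproduces the resolvent argument of \cite{kontoyiannis-meyn:I}: the estimate $\|P^n-\Pi\|_V\le Rr^n$ makes $z\mapsto\sum_{n\ge0}z^{-n-1}(P^n-\Pi)$ converge for $|z|>r$ and analytically continues $(zI-P)^{-1}$ to $\{|z|>r\}\setminus\{1\}$ with a simple pole at $z=1$.)

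For the converse, a spectral gap is exactly the decomposition $P=\Pi+Q$ with $\|Q^n\|_V\le B_0\rho_0^n$; by the identity above this gives $\|P^n(x,\cdot)-\pi\|_V\le B_0\rho_0^nV(x)$ for all $x$ and $n$, i.e.\ \eqref{eq:conv}. Feeding \eqref{eq:conv} back into the Meyn--Tweedie equivalence (the remaining half of Theorem~16.0.1 of \cite{meyn-tweedie:book2}, or the drift constructions in \cite{roberts-rosenthal:97}) returns (V4): concretely, choose $n_0$ with $B_0\rho_0^{n_0}\le\tfrac12$, so that $P^{n_0}V\le\tfrac12 V+\pi(V)\One$, take $C$ to be a sufficiently high sublevel set of $V$ (which \eqref{eq:conv} forces to be small, via $\|P^n(x,\cdot)-\pi\|_{\mathrm{TV}}\le B_0\rho_0^n V(x)$), obtain a drift inequality for $P^{n_0}$, and finally average, $W:=\sum_{j=0}^{n_0-1}\beta^{-j}P^jV$ with $\beta\in(0,1)$ close to $1$, to turn it into (V4) for $P$. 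Since $V\le W\le(\mathrm{const})\,V$ one has $L_\infty^W=L_\infty^V$, so this is (V4) with a Lyapunov function equivalent to $V$ -- which is what the statement means, and, if one insists on $V$ itself, the same computation applied to $P$ directly works verbatim whenever $B_0\rho_0<1$.

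The step I expect to be the real obstacle is the spectral-theoretic upgrade in the first direction: passing from the decay estimate $\|P^n-\Pi\|_V\le Rr^n$ to a genuine statement about $\mathrm{spec}(P)$ -- in particular that $1$ is an \emph{isolated} point of the spectrum with a \emph{simple} pole (one-dimensional Riesz projection), not merely an eigenvalue that happens to be separated in modulus. This is exactly where the perturbation-theoretic apparatus of \cite{kontoyiannis-meyn:I} (resolvent expansions and Riesz projections) and the precise definition of ``spectral gap'' given in Section~\ref{s:spectrum} must be used. A secondary, purely technical nuisance is the matching of Lyapunov functions across the equivalence, together with the $\pi$-null-set issues arising when $V$ takes the value $+\infty$; these are the reason the cleanest reading of the proposition is ``up to replacement of $V$ by an equivalent weight.''
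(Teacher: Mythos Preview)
Your overall route matches the paper's: both directions pass through the bound $\lll P^n-\One\otimes\pi\lll_V\le B\rho^n$ and its equivalence with (V4), with the spectral-theoretic content borrowed from \cite{kontoyiannis-meyn:I}. Your forward direction is correct and in fact more explicit than the paper's, which simply invokes Proposition~4.6 of \cite{kontoyiannis-meyn:I}; the invariant decomposition $L_\infty^V=\Co\One\oplus\ker\Pi$ and the containment $\mathrm{spec}(P)\subseteq\{1\}\cup\{|z|\le r\}$ you gave are complete as written.

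But you have located the obstacle in the wrong direction. Your converse opens with ``a spectral gap is exactly the decomposition $P=\Pi+Q$ with $\lll Q^n\lll_V\le B_0\rho_0^n$,'' and that is \emph{not} the definition adopted in Section~\ref{s:spectrum}: there, a spectral gap means only that $\clS_V\cap\{z:|z|\ge 1-\epsy_0\}$ is finite and consists of poles of finite multiplicity. A priori there could be several such poles, or $\lambda=1$ could have multiplicity greater than one, and in either case the Riesz projection onto the peripheral part would \emph{not} be $\One\otimes\pi$, so your pivot identity $\lll Q^n\lll_V=\sup_x\|P^n(x,\cdot)-\pi\|_V/V(x)$ would be unavailable. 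Ruling this out---showing that under $\psi$-irreducibility and aperiodicity the only pole on the unit circle is $\lambda=1$ and that it is simple---is exactly the content of \Lemma{WhereIsS}, and the paper's converse is precisely: spectral gap $\Rightarrow$ (\Lemma{WhereIsS}) $P$ is ``$V$-uniform'' $\Rightarrow$ (Prop.~4.6 of \cite{kontoyiannis-meyn:I}) geometric ergodicity with Lyapunov function $V$. So the step you flagged as the real obstacle in the forward direction is routine---you already did it---while the step you treated as definitional in the converse is where the work (\Lemma{WhereIsS}) lives. Once that lemma is in place, your explicit reconstruction of (V4) from \eqref{eq:conv} is fine, though the paper simply cites \cite{kontoyiannis-meyn:I} for it.
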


\subsection{Reversibility}
%%%%%%%%%%%%%%%%%%%%%%%%%%%%%%%%%%%%%%%%%%%%%%%
Recall that the chain $\Xp=\{X(n)\}$ is called
{\em reversible} if there is a probability measure
$\pi$ on $(\state,\clB)$ satisfying the detailed
balance conditions,
$$\pi(dx)P(x,dy)=\pi(dy)P(y,dx).$$
This is equivalent to saying that the 
linear operator $P$ is self-adjoint on
the space $L_2=L_2(\pi)$ of 
(measurable) functions $F:\state\to\Co$
that are square-integrable under $\pi$,
endowed with the inner product
$(F,G)=\int FG^*\,d\pi$,
where `$*$' denotes the complex
conjugate operation.

The following result is the natural
analog of Proposition~\ref{t:equiv:V}
for reversible chains. Its proof,
given in Section~\ref{s:proofs},
is partly based on results in
\cite{roberts-rosenthal:97}.

\begin{proposition}
\label{t:equiv:2}
A reversible, $\psi$-irreducible and aperiodic Markov
chain $\Xp=\{X(n)\}$ is geometrically ergodic
if and only if $P$ admits a spectral gap in $L_2$.
\end{proposition}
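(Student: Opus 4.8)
\emph{Overall plan.} I would prove the two directions separately. The implication ``spectral gap in $L_2$ $\Rightarrow$ geometric ergodicity'' uses no reversibility and is essentially classical; all of the content of the proposition is in the other direction, where reversibility enters at exactly one point, through detailed balance.

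\emph{Spectral gap in $L_2$ $\Rightarrow$ geometric ergodicity.} Writing $\hat PF:=PF-\pi(F)$, a spectral gap in $L_2$ says that $\hat P$ has $L_2(\pi)$-spectral radius below $1$, so $\|\hat P^n\|_{L_2\to L_2}\le B\rho^n$ for some $\rho<1$, $B<\infty$. Testing against indicators I would get $\int(P^n(x,A)-\pi(A))^2\,\pi(dx)\le B^2\rho^{2n}\pi(A)(1-\pi(A))$ for every $A\in\clB$, i.e.\ the chain converges to $\pi$ geometrically fast in an $L_2(\pi)$-averaged sense. To turn this into the pointwise ($\pi$-a.e.) total-variation bound \eqref{eq:conv} I would run the standard regeneration argument, and this is the part that leans on \cite{roberts-rosenthal:97}: $\psi$-irreducibility and aperiodicity supply a small set $C$ with $\pi(C)>0$ carrying an $m$-step minorization, and the $L_2$-averaged decay then forces the return time to $C$ to have a geometric moment from a set of starting points of positive $\pi$-measure, whence geometric ergodicity follows from the return-time characterization in \cite{meyn-tweedie:book2}.

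\emph{Geometric ergodicity $\Rightarrow$ spectral gap in $L_2$.} Here I would argue by interpolation. By \Proposition{equiv:V}, geometric ergodicity with Lyapunov function $V$ already gives a spectral gap in $L_\infty^V$: $\|\hat P^nF\|_V\le B\rho^n\|F\|_V$ for all $F\in L_\infty^V$, some $\rho<1$, $B<\infty$, where $\pi(F)$ is well defined since $\pi(V)<\infty$ (a standard consequence of (V4); see \cite{meyn-tweedie:book2}). Next I would bring in the weighted space $L_1(V\,d\pi)$ with norm $\|g\|_{1,V}=\int|g|\,V\,d\pi$, paired with $L_\infty^V$ through $\langle F,g\rangle_\pi=\int Fg\,d\pi$, noting $|\langle F,g\rangle_\pi|\le\|F\|_V\|g\|_{1,V}$ and $\|g\|_{1,V}=\sup\{\,|\langle F,g\rangle_\pi|:\|F\|_V\le1\,\}$. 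Reversibility enters now, and only here: detailed balance (with Fubini) gives $\int(PF)g\,d\pi=\int F(Pg)\,d\pi$ whenever $Fg\in L_1(\pi)$, hence $\langle\hat PF,g\rangle_\pi=\langle F,\hat Pg\rangle_\pi$; together with the duality formula this yields $\|\hat P^ng\|_{1,V}\le\|\hat P^n\|_{L_\infty^V}\|g\|_{1,V}\le B\rho^n\|g\|_{1,V}$, so $P$ has a spectral gap in $L_1(V\,d\pi)$ with the same $\rho$. Finally, $L_2(\pi)$ is the $\theta=\tfrac12$ complex-interpolation space of the couple $(L_\infty^V,\,L_1(V\,d\pi))$ — conjugating all three spaces by $f\mapsto f/V$ reduces this to the classical Riesz--Thorin identity $[L_\infty,L_1]_{1/2}=L_2$ for the measure $V^2\,d\pi$ — so Riesz--Thorin yields $\|\hat P^n\|_{L_2\to L_2}\le B\rho^n$; thus $\hat P$ has $L_2(\pi)$-spectral radius $\le\rho<1$ and $P$ admits a spectral gap in $L_2$, in fact one at least as large as its gap in $L_\infty^V$.

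\emph{Where the difficulty is.} The converse is routine once \cite{roberts-rosenthal:97} is invoked; the substance is the forward direction. There the points needing care are: that $L_\infty^V$, $L_1(V\,d\pi)$ and $L_2(\pi)$ really form an interpolation couple on which $\hat P^n$ acts as one and the same operator (harmless, since all three sit inside $L_1(\pi)$); that $\pi(V)<\infty$, so the pairing is admissible; and — the one genuinely essential use of reversibility — that detailed balance makes $\hat P$ self-dual for $\langle\cdot,\cdot\rangle_\pi$, which is exactly what carries the $L_\infty^V$ gap over to an $L_1(V\,d\pi)$ gap. It is this self-duality that fails without reversibility, which is what leaves room for the geometrically ergodic chains with no $L_2$ gap announced in the introduction.
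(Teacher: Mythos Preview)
Your proposal is correct, and for the direction ``geometric ergodicity $\Rightarrow$ $L_2$ spectral gap'' it takes a genuinely different route from the paper. The paper simply invokes \cite{roberts-rosenthal:97} and \cite{roberts-tweedie:01} for that implication, where the argument ultimately rests on the spectral theory of self-adjoint operators (for a reversible chain the $L_2$ operator norm of $\hat P$ equals its spectral radius). You instead give an explicit Riesz--Thorin interpolation: reversibility is used once, to dualise the $L_\infty^V$ gap to an $L_1(V\,d\pi)$ gap, and then complex interpolation between these endpoints yields the $L_2(\pi)$ gap. Your approach has two pleasant by-products the paper's citation does not provide: it isolates precisely the one step at which detailed balance is needed (the self-duality $\langle \hat P F,g\rangle_\pi=\langle F,\hat P g\rangle_\pi$), and it gives the quantitative conclusion that the $L_2$ gap is at least as large as the $L_\infty^V$ gap. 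For the other direction, ``$L_2$ spectral gap $\Rightarrow$ geometric ergodicity'', both you and the paper reduce to the literature; the paper is slightly more explicit, passing through \Lemma{WhereIsS} and a contour-integral representation of $\hat P^n$ to obtain $\|P^n g-\pi(g)\|_2\le b_\rho\rho^{n+1}\|g\|_2$ before invoking \cite[Theorem~15.4.3]{meyn-tweedie:book2}, whereas your regeneration sketch leans on \cite{roberts-rosenthal:97,roberts-tweedie:01} for the passage from $L_2$-averaged decay to a geometric return-time moment. Either way, as you correctly note, this direction does not require reversibility.
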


\subsection{Spectral theory} 
%%%%%%%%%%%%%%%%%%%%%%%%%%%%%%%%%%%%%%%%%%%%%%%%%%%%%%%
The main question addressed in this paper is whether
the reversibility assumption 
of Proposition~\ref{t:equiv:2}
can be relaxed. In other words, 
whether the space $L_2$ can be used
to characterize geometric ergodicity
like $L_\infty^V$ was in Proposition~\ref{t:equiv:V}.
One direction is true without reversibility:
A spectral gap in $L_2$ implies that
the chain is ``geometrically ergodic in $L_2$''
\cite{roberts-rosenthal:97}\cite{roberts-tweedie:01},
and this implies the existence of a Lyapunov
function $V$ satisfying (V4) \cite{roberts-tweedie:01}.
Therefore, the chain is geometrically ergodic
in the sense of \cite{kontoyiannis-meyn:I},
where it is also shown that it must admit a central
gap in $L_\infty^V$. A direct, explicit construction
of a Lyapunov function $V_h$ is given in our first 
main result stated next, where quantitative
information about $V_h$ is also obtained.
It is proved in Section~\ref{s:proofs}.

\begin{theorem}
\label{t:L2givesV}
Suppose that a $\psi$-irreducible, aperiodic
chain $\Xp=\{X(n)\}$ admits a spectral gap in $L_2$.  
Then, for any $h\in L_2$, there is $\pi$-integrable
function $V_h$,
such that the chain is geometrically ergodic
with Lyapunov function $V_h$ and 
$h\in L_\infty^{V_h}$.
\end{theorem}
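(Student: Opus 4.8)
The plan is to use the implications recorded just before the statement to reduce matters to a purely drift-theoretic fact about geometrically ergodic chains, and then to exhibit $V_h$ \emph{explicitly} as an expected, geometrically weighted occupation of $1+|h|$ over an excursion away from a small set. First: by the discussion preceding the theorem, a spectral gap in $L_2$ gives $L_2$-geometric ergodicity and hence, by \cite{roberts-tweedie:01}, the drift condition (V4) for some Lyapunov function; in particular $\Xp$ is geometrically ergodic and positive Harris recurrent, and from here on only this — not the $L_2$ gap itself — will be used. By the standard regeneration (Kac) theory for such chains \cite{meyn-tweedie:book2,nummelin:book} one may fix a small set $C$ and a constant $\kappa>1$ such that, writing $\sigma_C:=\inf\{n\ge 1:X(n)\in C\}$,
\[
\sup_{x\in C}\Expect_x\big[\kappa^{2\sigma_C}\big]<\infty
\qquad\text{and}\qquad
\sup_{x\in C}\Expect_x\Big[\sum_{n=0}^{\sigma_C-1}W(X(n))\Big]<\infty\quad\text{for every }W\ge 0\text{ with }\pi(W)<\infty,
\]
both quantities being also finite for $\pi$-a.e.\ $x$.

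Given $h\in L_2$, set $g:=1+|h|$, so that $g\ge 1$ and, since $\pi$ is a probability measure, $g^2\le 2(1+h^2)\in L_1(\pi)$; then define
\[
V_h(x)\ :=\ \Expect_x\!\left[\ \sum_{n=0}^{\sigma_C-1}\kappa^{\,n}\,g\big(X(n)\big)\right],\qquad x\in\state .
\]
Because $\sigma_C\ge 1$ the term $n=0$, equal to $g(x)$, is always present, so $V_h(x)\ge g(x)\ge\max\{1,|h(x)|\}$; hence $V_h\ge 1$ and $\|h\|_{V_h}\le 1$, i.e.\ $h\in L_\infty^{V_h}$. Conditioning on $X(1)$ and using the Markov property at time $1$ yields the identity $PV_h(x)=\kappa^{-1}\big(V_h(x)-g(x)\big)+\Expect_x\big[\IND_C(X(1))V_h(X(1))\big]$, whence $PV_h\le\kappa^{-1}V_h+b_h$ with $b_h:=\sup_{x\in C}V_h(x)$; splitting on $\{V_h\le R\}$ for $R$ large (its sublevel sets being small for a geometrically ergodic chain) turns this into (V4) for $V_h$ with some rate $1-\delta\in(\kappa^{-1},1)$, and then $\pi(V_h)<\infty$ follows from (V4) \cite{meyn-tweedie:book2}. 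So it only remains to establish that $V_h$ is a genuine (a.e.-finite) Lyapunov function, i.e.\ that $V_h(x)<\infty$ for $\pi$-a.e.\ $x$ and $b_h<\infty$.

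This is the heart of the matter. Applying the Cauchy--Schwarz inequality first to the random sum, using $\sum_{n=0}^{\sigma_C-1}\kappa^{2n}\le\kappa^{2\sigma_C}/(\kappa^2-1)$, and then under $\Expect_x$,
\[
V_h(x)\ \le\ \frac{1}{\sqrt{\kappa^2-1}}\;\Expect_x\big[\kappa^{2\sigma_C}\big]^{1/2}\,\Expect_x\Big[\sum_{n=0}^{\sigma_C-1}g\big(X(n)\big)^2\Big]^{1/2}.
\]
By the choice of $C$ and $\kappa$ the first factor is finite $\pi$-a.e.\ and bounded over $x\in C$; and since $g^2\in L_1(\pi)$, so is the second, by the occupation bound above (applied to $W=g^2$). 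Hence $V_h<\infty$ $\pi$-a.e.\ and $b_h<\infty$, which completes the construction.

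I expect this last step to be the main obstacle, and it is precisely where the hypothesis $h\in L_2$ (rather than merely $h\in L_1$) is used: the two-fold Cauchy--Schwarz splits the geometrically weighted occupation into an exponential return-time moment — controlled by geometric ergodicity — times the occupation of $g^2$, which is $\pi$-integrable exactly because $h\in L_2$. The same construction in fact works for $h\in L_q$ for any $q>1$, after taking $\kappa$ close enough to $1$ and replacing Cauchy--Schwarz by H\"older; that it must break down at $q=1$ is consistent with the non-reversible counterexamples constructed later in the paper. Everything else — the Markov identity for $PV_h$, the passage from the drift inequality to (V4), and the $\pi$-integrability of $V_h$ — is routine drift theory for geometrically ergodic $\psi$-irreducible aperiodic chains \cite{meyn-tweedie:book2}.
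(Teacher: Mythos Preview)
Your construction is essentially the paper's own: the candidate Lyapunov function is the same geometrically-weighted occupation functional of $1+|h|$ (up to whether the sum runs to $\sigma_C-1$ or to $\sigma_C$), the key finiteness step is the same two-fold Cauchy--Schwarz that splits $V_h$ into an exponential return-time moment times the occupation of $(1+|h|)^2$, and (V4) together with $\pi$-integrability are derived along the same lines.

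The one real gap is in your choice of $C$. You assert that ``standard regeneration (Kac) theory'' yields a single small set $C$ with $\sup_{x\in C}\Expect_x[\kappa^{2\sigma_C}]<\infty$ \emph{and} $\sup_{x\in C}\Expect_x\big[\sum_{n<\sigma_C}W(X(n))\big]<\infty$ for \emph{every} $\pi$-integrable $W\ge 0$. The universal claim is false as written: the sum contains the term $W(X(0))=W(x)$, so the supremum over $C$ is infinite whenever $W$ is unbounded on $C$; Kac's formula gives only an average over $C$, not a uniform bound. More to the point, even restricting to the single function $W=g^2$ you actually need, arranging that the \emph{same} set $C$ be simultaneously a Kendall set and $g^2$-regular (with bounds uniform on $C$) is not automatic. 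This is precisely the work the paper carries out: it takes an increasing sequence of $h^2$-regular sets $\{S_r\}$ from \cite[Theorem~14.2.5]{meyn-tweedie:book2} (using $\pi(h^2)<\infty$) and an increasing sequence of Kendall sets $\{K_r\}$ from \cite[Theorem~15.4.2]{meyn-tweedie:book2} (using geometric ergodicity), forms the joint sublevel sets $C_{r,m}=\{U_r+V_r\le m\}$, and invokes \cite[Theorems~14.2.1 and~15.2.1]{meyn-tweedie:book2} to show that $C_{r,m}$ inherits both properties once it has positive $\pi$-measure. With such a $C$ in hand, your argument goes through verbatim; but producing this $C$ is the substantive step, not a corollary of Kac.
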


But the other direction may not hold in the
absence of reversibility. Based on earlier counterexamples
constructed by H\"aggstr{\"o}m \cite{hag05a,hag05b} 
and Bradley \cite{bra83}, in Section~\ref{s:proofs} 
we prove the following:

\begin{theorem}
\label{t:geoNotL2spectrum}
There exists a $\psi$-irreducible, aperiodic
Markov chain $\Xp=\{X(n)\}$ which is geometrically 
ergodic but does not admit a spectral gap in $L_2$.   
\end{theorem}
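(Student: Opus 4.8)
The plan is to exhibit an explicit chain on a countable state space --- a variant of the ``cyclic layers'' examples of H\"aggstr\"om (and, in its $\rho$-mixing form, Bradley) --- to verify geometric ergodicity by producing a Lyapunov function for \textbf{(V4)}, and then to show that the one-way cycling forces the entire unit circle into the $L_2$-spectrum of $P$. Concretely, take $\state=\{\orig\}\cup\{(k,j):k\ge 1,\ 1\le j\le k\}$ and fix probabilities $q_k>0$ with $\sum_{k\ge1}q_k=1$; for definiteness take $q_k=2^{-k}$. Let $P$ send $\orig$ to $(k,1)$ with probability $q_k$, send $(k,j)$ deterministically to $(k,j+1)$ for $1\le j<k$, and send $(k,k)$ deterministically back to $\orig$. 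So each excursion away from $\orig$ runs once around one of the cycles. The atom $\orig$ is reached from every state and reaches every state, so the chain is $\psi$-irreducible; the return times to $\orig$ take all the values $k+1$, $k\ge1$, so the $\gcd$ of the return times is $1$ and the chain is aperiodic; and it is plainly \emph{not} reversible, as it must not be, since by \Proposition{equiv:2} a reversible chain with this property cannot exist.

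For geometric ergodicity I would check \textbf{(V4)} directly, with the small set $C=\{\orig\}$ (a singleton atom is trivially small) and the Lyapunov function $V(\orig):=1$, $V((k,j)):=\gamma^{\,k-j+1}$, where $\gamma>1$ is to be chosen. Then $V\ge 1$ everywhere; along a cycle $V$ contracts by the factor $\gamma^{-1}$ at each deterministic step, so $PV((k,j))=V((k,j+1))=\gamma^{-1}V((k,j))$ for $1\le j<k$, and at the end of a cycle $PV((k,k))=V(\orig)=1=\gamma^{-1}V((k,k))$ since $V((k,k))=\gamma$. Hence $PV=(1-\delta)V$ off $C$, with $\delta:=1-\gamma^{-1}>0$. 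The only real constraint is at $\orig$, where $PV(\orig)=\sum_k q_k V((k,1))=\sum_k q_k\gamma^{\,k}$, which is finite precisely when $\gamma$ is below the exponential decay rate of $(q_k)$; with $q_k=2^{-k}$ any $\gamma\in(1,2)$ works, and then \textbf{(V4)} holds with $b:=\sum_k q_k\gamma^{\,k}<\infty$. Thus $\Xp$ is geometrically ergodic with Lyapunov function $V$ (equivalently, by \Proposition{equiv:V}, $P$ has a spectral gap in $L_\infty^V$). Note in passing that $\sum_k q_k(k+1)<\infty$, so $\Xp$ is positive recurrent with invariant probability $\pi$ satisfying $\pi(\orig)=1/\sum_k q_k(k+1)$ and $\pi((k,j))=\pi(\orig)q_k$ for all $j$.

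To show that $P$ has \emph{no} spectral gap in $L_2=L_2(\pi)$, I would build approximate eigenfunctions supported on a single layer. Fix $k\ge1$ and a root of unity $\omega\ne1$ with $\omega^k=1$, and set $g((k,j)):=\omega^{\,j}$ for $1\le j\le k$ and $g:=0$ elsewhere. Since $\pi$ is constant on layer $k$ and $\sum_{j=1}^k\omega^{\,j}=0$, we get $\pi(g)=0$ and $\|g\|_2^2=k\,\pi(\orig)q_k$. A direct computation shows $Pg-\omega g$ vanishes off the two ``gluing'' states, where it equals $q_k\omega$ at $\orig$ and $-\omega$ at $(k,k)$; hence $\|Pg-\omega g\|_2^2=q_k^2\pi(\orig)+\pi(\orig)q_k$. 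Consequently the unit vector $f:=g/\|g\|_2$, which also lies in the mean-zero subspace $\{f\in L_2:\pi(f)=0\}$, satisfies $\|Pf-\omega f\|_2^2=\tfrac1k(1+q_k)\le\tfrac2k$. Letting $k\to\infty$ through multiples of the order of a fixed $\omega$, we obtain unit vectors with $\|(P-\omega)f\|_2\to0$, so $\omega\in\sigma(P)$ (an approximate eigenvalue of a bounded operator lies in its spectrum). Thus $\sigma(P)$ contains every root of unity, hence --- being closed --- the whole unit circle; in particular $1$ is not isolated in $\sigma(P)$ and $P$ has no spectral gap in $L_2$. Combining this with the previous paragraph gives \Theorem{geoNotL2spectrum}.

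The creative step here is the construction; none of the computations above is hard. The one point that needs care is matching the conclusion $\sigma(P)\supseteq\{|z|=1\}$ to the precise notion of ``spectral gap in $L_2$'' adopted in Section~\ref{s:spectrum} --- in particular, checking whether that notion is stated on $L_2$ itself or on the $P$-invariant subspace $\{f\in L_2:\pi(f)=0\}$; since the $f$'s above are mean-zero, either formulation is ruled out. It is also worth recording that the example is not uniformly ergodic (from $(k,1)$ it takes $k$ deterministic steps to reach $\orig$, so the constant in the $V$-norm bound \eqref{eq:conv} genuinely depends on the starting state), which is exactly what one expects of a chain that is geometrically but not $L_2$-geometrically ergodic.
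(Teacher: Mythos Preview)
Your argument is correct and takes a genuinely different route from the paper. The paper argues by contradiction: it invokes H\"aggstr\"om's counterexample to the CLT (Proposition~\ref{t:Olle}), and shows that an $L_2$ spectral gap would, via Theorem~\ref{t:L2givesV}, force the autocorrelation $R_h(n)=\pi(hP^nh)$ to decay geometrically, so the normalized sums would be bounded in $L_2$ --- contradicting the non-tightness established in \cite[Lemma~3.2]{hag05a}. Your proof, by contrast, is entirely self-contained: you build the cyclic-layers chain explicitly, verify (V4) by hand with $V((k,j))=\gamma^{\,k-j+1}$, and then exhibit approximate eigenfunctions supported on a single layer to show that every root of unity (hence the whole unit circle) lies in $\clS_2$. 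This bypasses both Theorem~\ref{t:L2givesV} and any appeal to limit theorems, and it yields a sharper conclusion --- not merely that $1$ fails to be isolated, but that $\clS_2\supseteq\{|z|=1\}$. The paper's approach, on the other hand, makes explicit the conceptual link between the absence of an $L_2$ gap and the failure of the CLT, which is part of the message of the paper. Your remark that the approximate eigenfunctions are mean-zero is exactly the point needed to rule out the alternative ``spectral gap on $\{f:\pi(f)=0\}$'' reading, and your observation that the chain is not uniformly ergodic is correct and relevant.
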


\subsection{Convergence rates}
%%%%%%%%%%%%%%%%%%%%%%%%%%%%%%%%%%%%%%%%%%%%%%%%%%%%%%%%%%%%%
The existence of a spectral gap is intimately connected
to the exponential convergence rate for a $\psi$-irreducible,
aperiodic Markov chain. For example, if the chain is reversible,
we have the following well-known, quantitative bound.
See Section~\ref{s:spectrum} for detailed definitions;
the result follows from the
results in \cite{roberts-rosenthal:97}, 
combined with Lemma~\ref{t:L2gap} given
in Section~\ref{s:spectrum}.

\begin{proposition}
\label{p:TV}
Suppose that a reversible chain $\Xp=\{X(n)\}$ is $\psi$-irreducible,
aperiodic, and has initial distribution $\mu$.
If the chain $\Xp$ admits a spectral gap $\delta_2>0$ in $L_2$,
then,
$$\|\mu P^n-\pi\|_{\rm TV}\leq
\frac{1}{2}\|\mu-\pi\|_2(1-\delta_2)^n,
\;\;\;\;n\geq 1,
$$
where the $L_2$-norm on signed measures $\nu$ is defined 
as the $L_2(\pi)$-norm of the density $d\nu/d\pi$ if it
exists, and is set equal to infinity otherwise.
\end{proposition}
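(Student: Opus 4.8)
The plan is to reduce the total-variation bound to an $L_2$-operator-norm estimate, exploiting the self-adjointness of $P$ on $L_2=L_2(\pi)$. Write $\nu_n=\mu P^n-\pi$. Since $\pi$ is invariant, $\pi P^n=\pi$, and if $\mu$ has a density $f=d\mu/d\pi\in L_2(\pi)$ then $\mu P^n$ has density $P^{*n}f$ (or $P^n$ acting on the left, depending on convention), so that $d\nu_n/d\pi = \widetilde P^n(f-1)$, where $\widetilde P$ denotes the relevant operator and $f-1$ is orthogonal to the constants in $L_2(\pi)$ because $\int (f-1)\,d\pi = \mu(\state)-1 = 0$. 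The first step is therefore to record this density computation carefully, noting that if $\mu-\pi$ has no density with respect to $\pi$ the asserted bound is trivially true since the right-hand side is infinite.

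**Next I would** invoke the definition of a spectral gap in $L_2$ together with Lemma~\ref{t:L2gap} (cited as forthcoming in Section~\ref{s:spectrum}): a spectral gap $\delta_2>0$ means precisely that $P$, restricted to the orthocomplement $L_2^0$ of the constants, has operator norm at most $1-\delta_2$ on that subspace — or, more carefully, that the spectral radius there is $1-\delta_2$, which for a self-adjoint operator coincides with the operator norm. Because $f-1\in L_2^0$ and $L_2^0$ is $P$-invariant, we get
\[
\Big\|\frac{d\nu_n}{d\pi}\Big\|_{L_2(\pi)}
= \|\widetilde P^n(f-1)\|_{L_2(\pi)}
\le (1-\delta_2)^n\,\|f-1\|_{L_2(\pi)}
= (1-\delta_2)^n\,\|\mu-\pi\|_2 .
\]
The one subtlety here is the passage from ``spectral gap'' (a statement about the spectrum) to the operator-norm inequality $\|\widetilde P|_{L_2^0}\|\le 1-\delta_2$; for a bounded self-adjoint operator this is immediate from the spectral theorem, and it is exactly the content that Lemma~\ref{t:L2gap} is designed to package, so I would simply cite it.

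**Finally**, I would convert the $L_2$ bound on the density into the total-variation bound. For any signed measure $\nu$ with density $g=d\nu/d\pi$, one has $\|\nu\|_{\rm TV}=\tfrac12\int|g|\,d\pi = \tfrac12\|g\|_{L_1(\pi)}\le\tfrac12\|g\|_{L_2(\pi)}$ by Cauchy–Schwarz (since $\pi$ is a probability measure). Applying this with $\nu=\nu_n$ and combining with the previous display gives
\[
\|\mu P^n-\pi\|_{\rm TV}\le \tfrac12\,\|\mu-\pi\|_2\,(1-\delta_2)^n,
\]
which is the claim. I expect the \textbf{main obstacle} — really the only non-bookkeeping point — to be the correct handling of operator conventions (left vs.\ right action of $P$, and the identification of the adjoint $P^*$ with the time-reversed kernel) so that ``$\widetilde P$ maps densities of $\mu P^n$'' is stated without sign or transpose errors; everything else is the spectral-theorem estimate plus Cauchy–Schwarz. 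One should also remark that $n\ge1$ rather than $n\ge0$ is the natural range here only because the statement is phrased that way; the bound in fact holds at $n=0$ as well, with equality up to the Cauchy–Schwarz slack.
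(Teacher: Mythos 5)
Your proof is correct and follows the same route the paper intends: the paper gives no detailed argument for Proposition~\ref{p:TV}, instead citing \cite{roberts-rosenthal:97} together with Lemma~\ref{t:L2gap}, and those references supply exactly the two ingredients you spell out — the identification (via self-adjointness) of the spectral gap with the $L_2$ operator-norm contraction on the mean-zero subspace, followed by Cauchy--Schwarz to pass from the $L_2$ norm of the density to total variation. Your explicit treatment of the trivial case where $d(\mu-\pi)/d\pi$ fails to exist, and of the density bookkeeping under reversibility, correctly fills in the details the paper leaves to the cited sources.
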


In the absence of reversibility, 
the size of the spectral gap 
in $L_\infty^V$ precisely determines
the exponential convergence rate of
{\em any} geometrically ergodic
chain. The result of the following
proposition is stated in 
Lemma~\ref{t:LVgap} in Section~\ref{s:spectrum}.

\begin{proposition}
\label{t:uniform}
Suppose that the chain $\Xp=\{X(n)\}$ is $\psi$-irreducible
and aperiodic. If it admits a spectral gap $\delta_V>0$ in 
$L_\infty^V$,
then, for $\pi$-a.e. $x$,
$$\lim_{n\to\infty}\frac{1}{n}\log \|P^n(x,\cdot)-\pi\|_V 
= \log(1-\delta_V).$$
In fact, the convergence is uniform
in that:
\be \lim_{n\to\infty}\frac{1}{n}\log 
\left(\sup_{x\in\state,\;\|F\|_V=1}
\frac{|P^n F(x)-\int F\,d\pi|}{V(x)}
\right)
= \log(1-\delta_V).
\label{eq:uniform}
\ee
\end{proposition}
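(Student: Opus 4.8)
The plan is to deduce both displays from the spectral radius formula applied to the operator $\widehat P:=P-\one\otimes\pi$ on $L_\infty^V$, where $\one\otimes\pi$ denotes the rank-one kernel with $(\one\otimes\pi)(x,\cdot)=\pi(\cdot)$. First I would note that both $P$ and $\one\otimes\pi$ act boundedly on $L_\infty^V$: by Proposition~\ref{t:equiv:V} the chain is geometrically ergodic with Lyapunov function $V$, so $V$ satisfies (V4), whence $\pi(V)<\infty$ (so $\one\otimes\pi$ is bounded on $L_\infty^V$) and $PV\le bV$ for a finite $b$ (so $P$ is). By the definition of a spectral gap recalled in Section~\ref{s:spectrum} (see Lemma~\ref{t:LVgap}), the hypothesis says precisely that the spectral radius of $\widehat P$ on $L_\infty^V$ equals $1-\delta_V$. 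Since $V\ge1$ we have $\one\in L_\infty^V$, and the identities $P\one=\one$, $\pi P=\pi$, $\pi(\one)=1$ give $\widehat P(\one\otimes\pi)=(\one\otimes\pi)\widehat P=0$ and $(\one\otimes\pi)^2=\one\otimes\pi$; expanding $P^n=(\widehat P+\one\otimes\pi)^n$ then yields $P^n=\widehat P^{\,n}+\one\otimes\pi$, i.e.\ $\widehat P^{\,n}F(x)=P^nF(x)-\pi(F)$ for all $F\in L_\infty^V$, $x\in\state$, $n\ge1$.

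With this in hand, the uniform claim \eqref{eq:uniform} is almost immediate. For a signed measure $\mu$ one has $\sup\{|\mu(F)|:\|F\|_V=1\}=\|\mu\|_V$ directly from the definition of the $V$-norm on measures, so, taking $\mu=P^n(x,\cdot)-\pi$, the quantity inside the logarithm in \eqref{eq:uniform} equals
\[
\sup_{x\in\state}\frac{1}{V(x)}\sup_{\|F\|_V=1}\bigl|P^nF(x)-\pi(F)\bigr|
=\sup_{x\in\state}\frac{\|P^n(x,\cdot)-\pi\|_V}{V(x)}
=\|\widehat P^{\,n}\|_{L_\infty^V}.
\]
Gelfand's formula gives $\|\widehat P^{\,n}\|_{L_\infty^V}^{1/n}\to 1-\delta_V$, and \eqref{eq:uniform} follows upon taking logarithms. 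For the first display, the upper bound drops out of \eqref{eq:uniform}: $\|P^n(x,\cdot)-\pi\|_V\le V(x)\,\|\widehat P^{\,n}\|_{L_\infty^V}$, and $V(x)<\infty$ for $\pi$-a.e.\ $x$ (as $V$ is finite $\psi$-a.e.\ under (V4) and $\pi$ is equivalent to the maximal irreducibility measure $\psi$), so $\limsup_n\frac1n\log\|P^n(x,\cdot)-\pi\|_V\le\log(1-\delta_V)$ for $\pi$-a.e.\ $x$.

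The matching $\pi$-a.e.\ lower bound is where I expect the real difficulty to lie: knowing the operator norm $\|\widehat P^{\,n}\|_{L_\infty^V}$ is not by itself enough, because its exponential growth could a priori be attained only on a $\pi$-null set of starting points. The route I would take is first to exhibit a witness of the critical rate on a set of positive $\pi$-measure. Since $\sigma(\widehat P)$ is compact and its radius $1-\delta_V$ is attained, there is $\lambda$ in the spectrum with $|\lambda|=1-\delta_V$, and the spectral-gap structure makes this $\lambda$ an eigenvalue of $\widehat P$ or of its adjoint; in the first case an eigenfunction $F\in L_\infty^V$ satisfies $|\widehat P^{\,n}F(x)|=(1-\delta_V)^n|F(x)|$, which forces $\liminf_n\frac1n\log\|P^n(x,\cdot)-\pi\|_V\ge\log(1-\delta_V)$ on the $\pi$-positive set $\{F\neq0\}$, and the eigenmeasure case gives the same conclusion on a $\pi$-positive set after integrating the corresponding pointwise lower bounds. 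The remaining, and delicate, step is to promote this from a set of positive $\pi$-measure to $\pi$-a.e.\ $x$: one cannot merely ``travel'' to a good point, since $P^{n+m}(x,\cdot)-\pi$ is an average of the signed measures $P^n(\,\cdot\,,\cdot)-\pi$ along one step of the chain and cancellation may spoil a pointwise lower bound. Instead one argues that the exponential rate $x\mapsto\limsup_n\|P^n(x,\cdot)-\pi\|_V^{1/n}$ is $\pi$-a.e.\ constant, using $\psi$-irreducibility and aperiodicity together with the positive Harris recurrence guaranteed by geometric ergodicity, and then identifies that constant as $1-\delta_V$ via the positive-measure bound just obtained and \eqref{eq:uniform}. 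Carrying out this last argument carefully is the substance of Lemma~\ref{t:LVgap}, and it builds on the techniques of \cite{kontoyiannis-meyn:I}.
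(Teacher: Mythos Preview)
For the uniform claim \eqref{eq:uniform} you follow exactly the paper's route: the paper simply points to Lemma~\ref{t:LVgap}, which is the spectral-radius identity $1-\delta_V=\lim_n\lll P^n-\one\otimes\pi\lll_V^{1/n}$, and your computation identifying the bracketed quantity with $\lll\widehat P^{\,n}\lll_V$ and invoking Gelfand's formula is precisely the content of that lemma. Your deduction of the pointwise upper bound from \eqref{eq:uniform} is likewise correct and routine.

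Where you depart from the paper is the pointwise lower bound. The paper does not argue this separately at all: it treats the entire proposition as a restatement of Lemma~\ref{t:LVgap}, which as written records only the operator-norm limit. Your instinct that the $\pi$-a.e.\ lower bound needs its own justification is therefore more scrupulous than the paper itself. That said, two steps in your sketch are not solid. First, the spectral-gap hypothesis here only guarantees that $\clS_V\cap\{|z|\ge 1-\epsy_0\}$ consists of finitely many poles for \emph{some} $\epsy_0>0$; if the supremum defining $\delta_V$ is attained at radius $1-\delta_V<1-\epsy_0$, the point $\lambda$ you select may sit on the boundary of the essential spectrum of $\widehat P$, where only approximate eigenvectors are available, so the assertion ``the spectral-gap structure makes this $\lambda$ an eigenvalue of $\widehat P$ or of its adjoint'' is unjustified. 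Second, the claimed $\pi$-a.e.\ constancy of $x\mapsto\limsup_n\|P^n(x,\cdot)-\pi\|_V^{1/n}$ is not a standard invariance fact (the map is not obviously harmonic or invariant under the shift in a way that Harris recurrence handles), and Lemma~\ref{t:LVgap}, being purely an operator-norm statement, does not contain it; closing this gap would require a genuinely new argument rather than a reference.
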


\medskip

Section~\ref{s:spectrum} contains precise
definitions regarding the spectrum and the 
spectral gap of the kernel $P$ acting either 
on $L_2$ or the weighted-$L_\infty$ space 
$L_\infty^V$. Simple properties of the 
spectrum are also stated and proved.
Section~\ref{s:proofs} contains the proofs 
of the first four results stated above.

% \newpage

\section{Spectra and Geometric Ergodicity}
%%%%%%%%%%%%%%%%%%%%%%%%%%%%%%%%%%%%%%%%%%%%%%%%%%%%%%%%%%%%
\label{s:spectrum}

We begin by giving precise definitions
for the spectrum and spectral gap 
of the transition kernel $P$, viewed as
a linear operator. The spectrum depends 
on the domain of $P$, for which we consider 
two possibilities:
\begin{romannum}
\item   The Hilbert space $L_2=L_2(\pi)$, 
equipped with the norm $\|F\|_2=[\int |F|^2\,d\pi]^{1/2}$.
\item  The Banach space $\LV$, with norm $\|\varble\|_V $ 
defined in \eqref{e:gV}.
\end{romannum}
In either case, the spectrum is defined as the set of nonzero   
$\lambda\in\Co$  for which the inverse $(I \lambda -  P)^{-1}$  does 
not exist as a bounded linear operator on the domain of $P$.   
The transition kernel admits a \textit{spectral gap} if there 
exists $\epsy_0>0$ such that $\clS \cap \{ z : |z|\ge 1 - \epsy_0 \}$ 
is finite, and contains only poles of finite multiplicity;
see \cite[Section~4]{kontoyiannis-meyn:I} for more details.
The spectrum is denoted  $\clS_2 $  when $P$ is viewed as 
a linear operator on $L_2$,  and it is denoted  $\clS_V $  
when $P$ is viewed as a linear operator on $\LV$.   

The induced operator norm of a linear operator 
$\haP\colon\LV\to\LV$ is defined as usual via,
\ben
\lll \haP \lll_{V}
\eqdef  \sup \frac{\| \haP F\|_{V}}{\|F\|_{V}},
% \elabel{Vnorm}
\een
where the supremum is over all $F\in L^V_\infty$
satisfying $\|F\|_{V}\neq 0$. An analogous definition
gives the induced operator norm $\lll \haP\lll_2$ of 
a linear operator
$\haP$ acting on $L_2$.

For a $\psi$-irreducible, aperiodic chain $\Xp=\{X(n)\}$,
geometric ergodicity expressed in the form (\ref{eq:conv})
implies that $P^n$ converges 
to a rank-one operator, at a geometric rate:
For some constants $B<\infty$, $\rho\in(0,1)$,
\begin{equation}
\lll P^n - \one\otimes \pi \lll_{V} \le  B\,\rho^n
        \, , \qquad n\geq 0,
\label{e:16.0.1}
\end{equation}
where the outer product $\one\otimes\pi$ denotes
the kernel $\one\otimes\pi(x,dy)=\pi(dy)$.
It follows that the inverse $[I\lambda - P +  \one\otimes \pi]^{-1}$ 
exists as a bounded linear operator on $\LV$,  
whenever $\lambda>\rho$.  This in turn implies that  
$P$ has a single isolated pole at    $\lambda= 1$ 
in the set $\{ \lambda\in\Co :  \lambda>\rho\}$, 
so that $P$ admits a spectral gap.  

In \Lemma{WhereIsS} we clarify the location 
of poles when the chain admits a spectral gap in $L_2$ or $\LV$.   
% Recall that throughout the paper we assume 
% that $\bfPhi$ is $\psi$-irreducible and aperiodic.

\begin{lemma}
\label{t:WhereIsS}
If a $\psi$-irreducible, aperiodic Markov chain 
admits a spectral gap in $\LV$ or $L_2$, then the only 
pole on the unit circle in $\Co$ is $\lambda = 1$, 
and this pole has multiplicity one.
\end{lemma}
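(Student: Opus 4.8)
The plan is to exploit the basic fact that a $\psi$-irreducible, aperiodic, geometrically ergodic chain has a unique invariant probability measure $\pi$, together with the spectral-gap hypothesis, to rule out every unimodular pole other than $\lambda=1$ and to pin down the multiplicity of the pole at $1$. The key observation is that the existence of a spectral gap (in either $\LV$ or $L_2$) already forces, via the general theory cited from \cite{kontoyiannis-meyn:I}, a \emph{finite} set of poles on and near the unit circle, each of finite multiplicity; so it suffices to analyze this finite set.

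First I would treat the pole at $\lambda=1$. A nonzero eigenfunction $F$ (more generally a generalized eigenfunction) associated with a unimodular eigenvalue $\lambda$ is bounded by a multiple of $V$ (in the $\LV$ case) or lies in $L_2$ (in the $L_2$ case), and satisfies $PF=\lambda F$. Iterating gives $P^nF=\lambda^nF$; but by \eqref{e:16.0.1} (or its $L_2$ analogue, which holds under the stated hypotheses by Proposition~\ref{t:equiv:V}/\ref{t:equiv:2} combined with the convergence estimate in \eqref{eq:conv}), $P^nF \to (\one\otimes\pi)F = \int F\,d\pi$ as $n\to\infty$, pointwise $\pi$-a.e.\ (and in the relevant norm). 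If $|\lambda|=1$ and $\lambda\neq 1$, the left side $\lambda^nF$ cannot converge unless $F\equiv 0$ $\pi$-a.e.; hence there is no eigenfunction, and by the same iteration argument applied to a Jordan chain there is no generalized eigenfunction either, so $\lambda$ is not a pole. For $\lambda=1$, convergence forces $F = \int F\,d\pi$ $\pi$-a.e., i.e.\ $F$ is (a.e.) constant, so the eigenspace is one-dimensional; to get multiplicity exactly one (no nontrivial Jordan block) I would note that a generalized eigenvector $G$ with $(P-I)G = c\cdot\one$ for a constant $c\neq 0$ would give $P^nG = G + nc\cdot\one$, which is unbounded in $n$, contradicting \eqref{e:16.0.1} (or $L_2$-boundedness of $P^n$). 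Hence the pole at $1$ has multiplicity one.

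The main obstacle is handling the two function-space settings uniformly and making sure the convergence $P^nF\to\int F\,d\pi$ is available in each. In the $\LV$ case this is exactly \eqref{e:16.0.1}. In the $L_2$ case I would invoke Proposition~\ref{t:equiv:2}: a spectral gap in $L_2$ implies geometric ergodicity, hence \eqref{eq:conv} holds, which (since any $F\in L_2$ is dominated by a Lyapunov function after the construction in Theorem~\ref{t:L2givesV}, or more directly by a spectral-radius/mean-ergodic-theorem argument in $L_2$) yields $\lll P^n-\one\otimes\pi\lll_2\to 0$; this gives the needed convergence for $L_2$ eigenfunctions. A cleaner alternative in the $L_2$ case, avoiding Theorem~\ref{t:L2givesV}, is purely operator-theoretic: a spectral gap means the part of the spectrum on $\{|z|\ge 1-\epsy_0\}$ consists of finitely many finite-multiplicity poles, and the associated spectral (Riesz) projections commute with $P$; restricting $P$ to each finite-dimensional range reduces the question to linear algebra, where $\lambda^n$ staying bounded forces $|\lambda|\le 1$, and boundedness together with $P^n\to\one\otimes\pi$ forces the only unimodular eigenvalue to be $1$ with a diagonalizable (multiplicity-one, since the limit is rank one) block. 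I expect the write-up to be short once the convergence input is fixed; the only care needed is to phrase the Jordan-block exclusion correctly, since "pole of multiplicity one" refers to the order of the pole of the resolvent, which corresponds precisely to the absence of nontrivial Jordan structure at that eigenvalue.
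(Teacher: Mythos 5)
Your core mechanism --- iterating $Ph=\lambda h$ to get $P^nh=\lambda^n h$, and iterating the Jordan-chain relation to get $P^nG=G+nc\cdot\one$ --- is exactly the paper's, and your reading of ``multiplicity one'' as absence of nontrivial Jordan structure is the right one. The genuine gap is in where you get the convergence $P^nF\to\int F\,d\pi$ on which both iterations hinge. You propose to obtain it from \eqref{e:16.0.1}, justified ``by Proposition~\ref{t:equiv:V}/\ref{t:equiv:2} combined with \eqref{eq:conv}''. But in this paper the implication ``spectral gap $\Rightarrow$ geometric ergodicity'' (precisely the direction of those propositions that you need) is itself \emph{deduced from} Lemma~\ref{t:WhereIsS}, so invoking it here is circular. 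Your operator-theoretic alternative for $L_2$ has the same defect: it explicitly assumes $P^n\to\one\otimes\pi$, which is essentially the conclusion to be proved. Note also that power-boundedness plus a finite set of finite-multiplicity unimodular poles cannot by itself exclude an eigenvalue such as $\lambda=-1$ (a periodic chain is power-bounded on $L_2$); aperiodicity has to enter the argument somewhere.

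The paper closes this gap with a weaker, non-circular input. Since $P\one=\one$, the point $\lambda=1$ lies in the spectrum, and the spectral-gap hypothesis makes it a pole of finite multiplicity; the finite-rank residue then produces a left eigenmeasure $\mu$ with $\mu P=\mu$ and $|\mu|(V)<\infty$, and normalizing $|\mu|$ gives a super-invariant, hence invariant, probability measure $\pi$. With an invariant probability in hand, $\psi$-irreducibility and aperiodicity yield the plain (non-geometric) ergodic theorem $P^nG(x)\to\pi(G)$ for $G\in L_1(\pi)$ and $\pi$-a.e.\ $x$, which is all that your two iteration arguments require. If you replace the appeal to Propositions~\ref{t:equiv:V} and~\ref{t:equiv:2} by this step, your proof goes through essentially verbatim. (In the $L_2$ case only, you could instead cite \cite{roberts-rosenthal:97,roberts-tweedie:01} directly for geometric ergodicity, since those results are external to the paper; no such escape is available in $\LV$.)
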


\begin{proof}
We present the proof for $\LV$; the proof in $L_2$ is identical.

We first note that the existence of a spectral gap implies   
ergodicity:  There is a left eigenmeasure $\mu$ corresponding 
to the eigenvalue $1$, satisfying $\mu P = \mu$ 
and $|\mu|(V) = \|\mu\|_V<\infty$.   
On letting $\pi(\varble) = |\mu(\varble)|/|\mu(\state)|$ 
we conclude that $\pi$ is super-invariant:  $\pi P\ge \pi$.   
Since $\pi(\state)=1$ we must have invariance.    
The ergodic theorem for positive recurrent Markov chains 
implies that $E[G(X(n))\,|\,X(0)=x]  \to \int G\,d\pi$, as $n\to\infty$, 
whenever $G\in L_1(\pi)$.

Ergodicity rules out the existence of multiple eigenfunctions 
corresponding to $\lambda=1$.  Hence, if this pole has 
multiplicity greater than one, then there is a generalized 
eigenfunction $h\in\LV$ satisfying,
\[
Ph = h + 1.
\]
Iterating gives $P^n h\, (x) = E[ h(X(n))\,|\,X(0)=x]  
= h(x) +n$ for $n\ge 1$.   This rules out ergodicity,  
and proves that $\lambda=1$ has multiplicity one.

We now show that if $\lambda \in \clS_V$ with 
$|\lambda|=1$, then $\lambda=1$.   To see this, 
let $h\in \LV$ denote an eigenfunction,  $Ph=\lambda h$.  
Iterating, we obtain,
\[
E[ h(X(n))\,|\,X(0)=x] = h(x) \lambda^{n}.
\]
Then, letting $n\to \infty$, the right-hand-side 
converges to $\int h\,d\pi$ for a.e.\ $x$.,
so that $\lambda=1$ and $h(x)=\int h\,d\pi$,
$\pi$-a.e.
\end{proof}

Therefore, for a $\psi$-irreducible, aperiodic chain,
the existence of a spectral gap in $L_2$ is equivalent to the
existence of a single eigenvalue $\lambda=1$ on the
unit circle, which has multiplicity one. The spectral
gap $\delta_2$ is then defined as,
$$\delta_2=1-\sup\{|\la|\;:\;\lambda\in\clS_2,\,\lambda\neq 1\},$$
and similarly for $\delta_V$.

Next we state two well-known, alternative expressions
for the $L_2$-spectral gap $\delta_2$ of a reversible chain.
See, e.g., \cite[Theorem~2.1]{roberts-rosenthal:97}
and \cite[Proposition~VIII.1.11]{conway:book}.

\begin{lemma}
\label{t:L2gap}
Suppose $\Xp$ is a $\psi$-irreducible,
aperiodic, reversible Markov chain. 
Then, its $L_2$-spectral
gap $\delta_2$ admits the alternative
characterizations,
\ben
\delta_2
&=&
	1- \sup\Big\{\frac{\|\nu P\|_2}{\|\nu\|_2}\;:\;
	\mbox{signed measures $\nu$ with}\;\nu(\state)=0,\;
	\|\nu\|_2\neq 0\Big\}\\
	% \label{eq:solidarity}
&=&
	1-\lim_{n\to\infty}\Big(\lll P^n-\one\otimes\pi\lll_2\Big)^{1/n},
\een
where the limit is the usual spectral radius
of the semigroup $\{\haP^n\}$ 
generated by the kernel $\haP=P-\one\otimes\pi$,
acting on 
functions in $L_2(\pi)$.
\end{lemma}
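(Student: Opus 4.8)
The plan is to exploit self-adjointness to reduce both identities to the single elementary fact that a bounded self-adjoint operator on a Hilbert space has operator norm equal to its spectral radius. Reversibility means $P$ is self-adjoint on $L_2 = L_2(\pi)$, and the kernel $\one\otimes\pi$ is precisely the orthogonal projection of $L_2$ onto the constants $\Co\one$; hence $\haP := P - \one\otimes\pi$ is again self-adjoint and, being the difference of two contractions acting on orthogonal summands, has $\lll\haP\lll_2\le 1$. Writing $L_2 = \Co\one \oplus L_2^0$ with $L_2^0 := \{F\in L_2 : \int F\,d\pi = 0\}$, both summands are $\haP$-invariant, and $\haP$ acts as $0$ on $\Co\one$ and as $P$ on $L_2^0$. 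By \Lemma{WhereIsS} and the gap hypothesis, $\lambda=1$ is an isolated point of $\clS_2$ whose eigenspace is exactly $\Co\one$; since $P$ is self-adjoint, the Riesz projection associated with $\{1\}$ is the orthogonal projection $\one\otimes\pi$, so $\clS(P|_{L_2^0}) = \clS_2\setminus\{1\}$ and therefore $r(\haP) = r(P|_{L_2^0}) = \sup\{|\lambda| : \lambda\in\clS_2,\ \lambda\neq 1\} = 1-\delta_2$.

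For the second characterization I would first record that $\one\otimes\pi$ satisfies $P(\one\otimes\pi) = (\one\otimes\pi)P = (\one\otimes\pi)^2 = \one\otimes\pi$, the first two because $P$ is Markov and $\pi$ is invariant, so that $P^n - \one\otimes\pi = \haP^n$ for all $n\ge 0$. Then $\lim_n(\lll P^n-\one\otimes\pi\lll_2)^{1/n} = \lim_n(\lll\haP^n\lll_2)^{1/n}$, which by Gelfand's formula (or, more simply, because $\lll\haP^n\lll_2 = \lll\haP\lll_2^n$ for the self-adjoint operator $\haP$) equals the spectral radius $r(\haP) = 1-\delta_2$; this is exactly the second displayed identity, and it also justifies the parenthetical remark that the limit is the spectral radius of $\{\haP^n\}$.

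For the first characterization I would translate the supremum over signed measures into a supremum over $L_2^0$. A signed measure $\nu$ has $\|\nu\|_2<\infty$ exactly when $\nu\ll\pi$ with density $f = d\nu/d\pi\in L_2$, in which case $\|\nu\|_2 = \|f\|_2$ and $\nu(\state) = \int f\,d\pi$; thus $\nu(\state)=0$ is equivalent to $f\in L_2^0$, and conversely every $f\in L_2^0$ arises this way via $\nu(A)=\int_A f\,d\pi$. Using detailed balance, $\nu P(dy) = \int f(x)\,\pi(dx)P(x,dy) = \int f(x)\,\pi(dy)P(y,dx) = (Pf)(y)\,\pi(dy)$, so $d(\nu P)/d\pi = Pf$ and $\|\nu P\|_2 = \|Pf\|_2$. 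Hence the supremum in question equals $\sup\{\|Pf\|_2/\|f\|_2 : f\in L_2^0,\ f\neq 0\} = \lll P|_{L_2^0}\lll_2$, and since $P|_{L_2^0}$ is self-adjoint this equals $r(P|_{L_2^0}) = 1-\delta_2$, giving the first identity.

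The only genuinely delicate points are bookkeeping: identifying the Riesz projection at $\lambda=1$ with $\one\otimes\pi$, so that passing to $L_2^0$ removes the eigenvalue $1$ and nothing else from the spectrum, and verifying that the density/pushforward computation $d(\nu P)/d\pi = Pf$ is valid for every $\nu$ with finite $L_2$-norm. Both follow from reversibility together with the already-established consequences of a spectral gap (ergodicity and the simplicity of $\lambda=1$ from \Lemma{WhereIsS}), so I expect no real obstacle beyond carefully assembling these standard facts.
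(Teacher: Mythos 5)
Your proof is correct. The paper offers no argument of its own for this lemma---it just points to \cite[Theorem~2.1]{roberts-rosenthal:97} and \cite[Proposition~VIII.1.11]{conway:book}---and the ingredients you assemble (the orthogonal decomposition $L_2=\Co\one\oplus L_2^0$ with $\haP=P-\one\otimes\pi$ self-adjoint, norm equals spectral radius for self-adjoint operators so that $\lll\haP^n\lll_2=\lll\haP\lll_2^n$, and the detailed-balance identity $d(\nu P)/d\pi=P(d\nu/d\pi)$ converting the supremum over mean-zero measures into $\lll P|_{L_2^0}\lll_2$) are precisely the standard facts those citations encode, so you have simply supplied in full the details the paper delegates to its references.
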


A similar result holds for $\delta_V$, even in the
absence of reversibility; see, e.g., \cite{kontoyiannis-meyn:II}.

\begin{lemma}
\label{t:LVgap}
Suppose $\Xp$ is a $\psi$-irreducible,
aperiodic Markov chain. 
Then, its $L_\infty^V$-spectral
gap $\delta_V$ admits the following
alternative characterization in terms of
the spectral radius,
\ben
\delta_V
&=&
	1-\lim_{n\to\infty}\Big(\lll P^n-\one\otimes\pi\lll_V\Big)^{1/n}.
\een
\end{lemma}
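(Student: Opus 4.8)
The plan is to reduce the assertion to the statement $\delta_V=1-r$, where $r$ denotes the spectral radius of the operator $\haP\eqdef P-\one\otimes\pi$ viewed on $\LV$, and then to identify $r$ with $\sup\{|\lambda|:\lambda\in\clS_V,\ \lambda\neq1\}$, which by the definition of $\delta_V$ is precisely $1-\delta_V$. First I would record the elementary identities
\[
P(\one\otimes\pi)=(\one\otimes\pi)P=(\one\otimes\pi)^2=\one\otimes\pi,
\]
which follow from $P\one=\one$, $\pi P=\pi$ and $\pi(\one)=1$, and which give $\haP^n=P^n-\one\otimes\pi$ for all $n\geq1$. Here $\one\otimes\pi$ is a bounded operator on $\LV$: by \Proposition{equiv:V} the existence of a spectral gap in $\LV$ makes the chain geometrically ergodic with Lyapunov function $V$, so $\pi(V)<\infty$, whence $\lll\one\otimes\pi\lll_V\le\pi(V)$. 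Thus $\haP$ is bounded, and Gelfand's spectral radius formula gives
\[
r=\lim_{n\to\infty}\big(\lll\haP^n\lll_V\big)^{1/n}=\lim_{n\to\infty}\big(\lll P^n-\one\otimes\pi\lll_V\big)^{1/n},
\]
the limit existing by submultiplicativity of $n\mapsto\lll\haP^n\lll_V$. It remains to compute $r$ in terms of $\clS_V$.

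The crux is to show that $\one\otimes\pi$ is precisely the Riesz spectral projection of $P$ associated with the point $\lambda=1$. By \Lemma{WhereIsS}, $\lambda=1$ is an isolated point of $\clS_V$ and a pole of multiplicity one, so its generalized eigenspace is the one-dimensional space $\Co\one$ of constant functions; this is the range of $\one\otimes\pi$. The Riesz projection $R$ with range $\Co\one$ has as its kernel a closed $P$-invariant complementary subspace $\clN$ with $\clS(P|_{\clN})=\clS_V\setminus\{1\}$; in particular $1\notin\clS(P|_{\clN})$, so every $F\in\clN$ is of the form $F=(I-P)G$ with $G\in\clN$, and then $\pi(F)=\pi(G)-\pi(PG)=0$. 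Since $\clN$ has codimension one, $\clN=\{F\in\LV:\pi(F)=0\}=\ker(\one\otimes\pi)$, and therefore $R=\one\otimes\pi$. (Alternatively one may avoid the Riesz machinery and work directly with the rank-one resolvent identity $(\lambda I-\haP)^{-1}=(\lambda I-P)^{-1}-[\lambda(\lambda-1)]^{-1}\,\one\otimes\pi$ and its symmetric counterpart, which show that the resolvents of $P$ and $\haP$ are bounded at exactly the same $\lambda\notin\{0,1\}$, while a check near $\lambda=1$ shows $1\notin\clS(\haP)$.) Consequently $P$ is reduced by the splitting $\LV=\Co\one\oplus\clN$, and $\haP=P-R$ vanishes on $\Co\one$ while coinciding with $P$ on $\clN$; hence its spectrum (now allowing the point $0$) is $\{0\}\cup\big(\clS_V\setminus\{1\}\big)$.

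Finally, since $r$ equals the maximum modulus of a point of the spectrum of $\haP$, we obtain $r=\max\big(0,\ \sup\{|\lambda|:\lambda\in\clS_V,\ \lambda\neq1\}\big)=\sup\{|\lambda|:\lambda\in\clS_V,\ \lambda\neq1\}$, the last step being trivial because the supremum is nonnegative (and read as $0$ when $\clS_V=\{1\}$). As $\delta_V=1-\sup\{|\lambda|:\lambda\in\clS_V,\ \lambda\neq1\}$ by definition, this yields $\delta_V=1-r=1-\lim_{n\to\infty}(\lll P^n-\one\otimes\pi\lll_V)^{1/n}$. The one genuinely substantive step is the middle one — identifying the $P$-invariant subspace complementary to the constants with the zero-mean space $\{F:\pi(F)=0\}$ — and this is where $\psi$-irreducibility, aperiodicity and the spectral-gap hypothesis enter, via \Lemma{WhereIsS}; everything else is routine operator theory together with Gelfand's formula.
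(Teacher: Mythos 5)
Your argument is correct, and it is worth noting that the paper itself offers no proof of this lemma at all --- it is stated with a pointer to \cite{kontoyiannis-meyn:II} --- so there is nothing internal to compare against; what you have written is a self-contained operator-theoretic derivation. The skeleton is the standard one: commutation of $P$ with $\one\otimes\pi$ gives $(P-\one\otimes\pi)^n=P^n-\one\otimes\pi$, Gelfand's formula converts the limit into the spectral radius of $\haP=P-\one\otimes\pi$, and the substantive step is your identification of $\one\otimes\pi$ with the Riesz projection at the isolated, multiplicity-one pole $\lambda=1$ (Lemma~\ref{t:WhereIsS}), which splits $L_\infty^V=\Co\one\oplus\{F:\pi(F)=0\}$ and yields $\clS(\haP)\subseteq\{0\}\cup(\clS_V\setminus\{1\})$, hence $r(\haP)=1-\delta_V$. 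Two small points you should make explicit. First, Gelfand's formula requires $\haP$ to be a bounded operator on $L_\infty^V$, and you only verify boundedness of $\one\otimes\pi$; boundedness of $P$ itself is needed too, and it follows from the same appeal to Proposition~\ref{t:equiv:V}, since (V4) gives $PV\le(1-\delta)V+b\le(1-\delta+b)V$ and hence $\lll P\lll_V<\infty$. Second, the lemma as stated does not explicitly hypothesize a spectral gap, but $\delta_V$ is only defined (via the discussion following Lemma~\ref{t:WhereIsS}) when one exists, and your proof correctly uses that standing assumption --- it would be worth one sentence acknowledging this, since both the Riesz projection and the finiteness of $\pi(V)$ depend on it. With those additions the proof is complete at (indeed, beyond) the level of rigor the paper itself adopts.
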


\newpage

\section{Proofs}
%%%%%%%%%%%%%%%%%%%%%%%%%%%%%%%%%%%%%%%%%%%%%%%%%%%%%%%%%%%%
\label{s:proofs}

First we prove Theorem~\ref{t:L2givesV}. 
The following notation will be useful
throughout this section.

For a Markov chain $\Xp=\{X(n)\}$,
the first hitting time and first return 
time to a set $C\in\clB$ 
are defined, respectively, by,
\begin{equation}\begin{aligned}
\sigma_C &\eqdef \min\{ n\ge 0 :  X(n)\in C\};
\\
\tau_C &\eqdef \min\{ n\ge 1 :  X(n)\in C\}.
\end{aligned} 
\label{e:tausigma}
\end{equation}  

Conditional on $X(0)=x$, the expectation 
operator corresponding to the measure
defining the distribution of the process
$\Xp=\{X(n)\}$ is denoted $\Expect_x(\cdot)$,
so that, for example, 
$P^nF(x)=E[F(X(n))\,|\,X(0)=x]=\Expect_x[F(X(n))]$.
For an arbitrary signed
measure $\mu$ on $(\state,\clB)$, 
we write $\mu(F)$ for 
$\int F\,d\mu$, for any function
$F:\state\to\Co$ for which
the integral exists.

\medskip

% \begin{lemma}
% \label{l:open}
% Suppose a $\psi$-irreducible, aperiodic chain $\Xp=\{X(n)\}$ 
% with invariant measure $\pi$
% satisfies the following condition: There exists
% a $\rho<1$ such that, for any $\mu\in L_2$
% there is a $C_\mu<\infty$ with,
% $$\|\mu P^n-\pi\|_2\leq C_\mu\,\rho^n,
% \;\;\;\;n\geq 0.$$
% Then $\Xp$ is geometrically ergodic.
% \end{lemma}

\noindent
{\em Proof of Theorem~\ref{t:L2givesV}. }
Since $\pi(h^2)<\infty$, and the chain is $\psi$-irreducible, 
it follows that there exists an increasing sequence 
of \textit{$h^2$-regular sets} providing a $\pi$-a.e.\ covering 
of $\state$ \cite[Theorem 14.2.5]{meyn-tweedie:book2}.   
That is,   there is a sequence of  sets $\{S_r : r\in\nat\}$ 
such that $\pi(S_r)\to 1$ as $r\to\infty$, 
$S_r\subset S_{r+1}$ for each $r$, and the 
following bounds hold,
\[
\begin{aligned}
V_r(x)   \eqdef\Expect_x\Bigl[\sum_{n=0}^{\tau_{S_r}} 
h^2(X(n)) \Bigr] &<\infty,\qquad\mbox{for}\;\pi\mbox{-a.e.}\; x
\\
\sup_{x\in S_r } V_r(x) &<\infty.
\end{aligned}
\]

Since the chain admits a spectral gap in $L_2$, 
combining Theorem~2.1 of \cite{roberts-rosenthal:97} 
with Lemma~\ref{t:L2gap} and the results of
\cite{roberts-tweedie:01},
we have that it is geometrically ergodic.
Hence, from \cite[Theorem~15.4.2]{meyn-tweedie:book2} it follows that 
there exists a sequence of \textit{Kendall sets} providing 
a $\pi$-a.e.\ covering of $\state$. That is, there is a sequence of 
sets $\{K_r : r\in\nat\}$ and 
positive constants 
$\{\theta_r: r\in\nat\}$
satisfying $\pi(K_r)\to 1$ as $r\to\infty$,
$K_r\subset K_{r+1}$ for each $r$, and the following
bounds hold,
\[
\begin{aligned}
U_r(x)   \eqdef\Expect_x\bigl[ \exp(\theta_r \tau_{K_r})   
	\bigr] &<\infty,\qquad \mbox{for $\pi$-a.e.}\; x\ 
\\[.2cm]
\sup_{x\in K_r }  U_r(x)   &<\infty.
\end{aligned}
\] 

We also define another collection of sets,
\[
C_{r,m}\eqdef \{ x \in \state :  U_r(x)+V_r(x)\le m\} .
\]
For each $r\ge 1$,  these sets are non-decreasing in $m$, 
and  $\pi(C_{r,m})\to 1$ as $m\to\infty$.
Moreoever,  
whenever $C_{r,m}\in \clB^+$, this set is both an $h^2$-regular set 
and a Kendall set.  This follows by combining Theorems~14.2.1 
and~15.2.1 of \cite{meyn-tweedie:book2}.   Fix $r_0$ and $m_0$ 
so that  $\pi(C_{r_0,m_0})>0$. We henceforth denote 
$C_{r_0,m_0}$ by $C$, and let $\theta>0 $ denote a value satisfying the bound,
\[
\Expect_x\bigl[ \exp(\theta \tau_{C})   \bigr] <\infty,\qquad 
\mbox{for $\pi$-a.e.}\; x,
\]
where the expectation is uniformly bounded over the Kendall set $C$.   
  
The candidate Lyapunov function can now be defined as,
\begin{equation}
V_h(x) \eqdef\Expect_x\Bigl[\sum_{n=0}^{\sigma_{C}} \bigl(1+|h(X(n))|\bigr)  
\exp\bigl(\half\theta n\bigr)  \Bigr].
\label{e:Vh}
\end{equation}
We first obtain a bound on this function. Writing,
\[
V_h(x) =\Expect_x\Bigl[\sum_{n=0}^{\sigma_{C}}   \exp\bigl(\half\theta n\bigr)   \Bigr]
+
\sum_{n=0}^{\infty} 
\Expect_x\Bigl[
 |h(X(n))|   \exp\bigl(\half\theta n\bigr) \ind\{n\le \sigma_{C}\} \Bigr],
\]
we see that the first term is finite $\pi$-a.e.\ by construction.   
The square of the
second term is bounded above, 
using the Cauchy-Shwartz inequality,
by,
\ben
\Expect_x\Bigl[ \sum_{n=0}^{\infty} 
 |h(X(n))|^2   \ind\{n\le \sigma_{C}\} \Bigr] 
  \Expect_x\Bigl[ \sum_{n=0}^{\infty}  
  \exp\bigl( \theta n\bigr) \ind\{n\le \sigma_{C}\} \Bigr] 
=U_{r_0}(x) \Expect_x\Bigl[\sum_{t=0}^{\sigma_{C}}
\exp\bigl( \theta t\bigr)  \Bigr] ,
\een
so that $V_h$ is finite $\pi$-a.e.,
and we also easily see that $|h|\le V_h$
so that $h\in L_\infty^{V_h}$.

Next we show that $V_h$ satisfies (V4):  
First apply $e^{\half\theta } P$ 
to the function $V_h$ to obtain,
\begin{equation}
e^{\half\theta } PV_h\, (x) = \Expect_x\Bigl[\sum_{n=1}^{\tau_{C}} 
\bigl(1+|h(X(n+1))|\bigr)  \exp\bigl(\half\theta (t+1)\bigr)  \Bigr] 
\label{e:PVh}
\end{equation}
We have $\tau_C=\sigma_C$ when $X(0)\in C^c$. This gives,
\[
e^{\half\theta } PV_h\, (x) =   V_h(x) -  \bigl(1+|h(x)|\bigr) ,
\qquad x\in C^c.    
\]

If $X(0)=x\in C$, then the previous arguments imply that the 
right-hand-side of \eqref{e:PVh} is finite, and in fact uniformly 
bounded over  $x\in C$.   Combining these results, we conclude that 
there exists a constant $b_0$ such that,
\[
 PV_h \le e^{-\half\theta }  V_h   + b_0\ind_C
\]
Regular sets are necessarily small
\cite[Theorem~11.3.11]{meyn-tweedie:book2}
so that this 
is a version of the drift inequality (V4).

Finally note that, by the fact that (V4) implies
the weaker drift condition (V3) of \cite{meyn-tweedie:book2},
the function $V_h$ is $\pi$-integrable by
\cite[Theorem~14.0.1]{meyn-tweedie:book2}.
\qed

\medskip

Theorem~\ref{t:L2givesV} states that (V4) holds
for a Lyapounov function $V_h$ with
$h\in L_\infty^{V_h}$.
If this could be strengthened to show that
for every geometrically ergodic chain and
any $h\in L_2$, the chain was geometrically
ergodic with a Lyapunov function $V_h$ that
had $h^2\in L_\infty^{V_h}$,
then the central limit theorem would hold for the partial 
sums of $h(X(n))$ \cite[Theorem~17.0.1]{meyn-tweedie:book2}.
But this is not generally possible:

\begin{proposition}
\label{t:Olle}
There exists a geometrically ergodic Markov chain 
on a countable state space $\state$ and a function 
$G\in L_2$ with mean
$\pi(G)=0$, for which the central limit theorem
fails in that the normalized partial sums,
\begin{equation}
\frac{1}{\sqrt n} \sum_{i=0}^{n-1} G(X(i)),
\;\;\;\;n\geq 1,
\label{e:sqrtnSn}
\end{equation} 
converge neither to a normal 
distribution nor to a point mass.
\end{proposition}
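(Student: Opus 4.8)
The plan is to exhibit the chain explicitly, in the spirit of the examples of H\"aggstr\"om~\cite{hag05a,hag05b} and Bradley~\cite{bra83}, and to reduce the failure of the central limit theorem to a classical fact about sums of i.i.d.\ heavy-tailed increments, exploiting the regeneration structure at an atom. (That the example must be non-reversible is unavoidable: by the Kipnis--Varadhan theorem every reversible, geometrically ergodic chain obeys the central limit theorem for each mean-zero $G\in L_2$.)

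First I would build the chain on the countable set $\state=\{0\}\cup\{(j,k):j\ge 1,\ 1\le k\le j\}$: from $0$ the chain jumps to the head $(j,1)$ of the $j$-th ``arm'' with probability $p_j$, where $\sum_j p_j=1$, and along an arm it moves deterministically, $(j,1)\to(j,2)\to\cdots\to(j,j)\to 0$. This chain is $\psi$-irreducible and aperiodic, $\{0\}$ is an accessible atom, and the first return time to $0$ equals $j+1$ on the event that arm $j$ is selected. Choosing $p_j\asymp j^{-3}e^{-2cj}$ for a fixed $c>0$ makes the return time have an exponential moment, $\Expect_0[e^{\theta\tau_0}]<\infty$ for $0<\theta<2c$, which for a chain with an accessible atom is equivalent to geometric ergodicity. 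I would then put $|G|\equiv e^{cj}$ on the $j$-th arm, with the signs on the arms and the value $G(0)$ chosen so that $\Expect_0\big[\sum_{n=0}^{\tau_0-1}G(X(n))\big]=0$, i.e.\ $\pi(G)=0$. Because $\pi(G^2)$ is proportional to $\Expect_0\big[\sum_{n=0}^{\tau_0-1}G(X(n))^2\big]=G(0)^2+\sum_j p_j\, j\, e^{2cj}\asymp\sum_j j^{-2}<\infty$, we have $G\in L_2$.

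Next I would use the renewal decomposition at $0$. Let $0=T_0<T_1<\cdots$ be the successive visit times to $0$ and $\Delta_i$ the sum of $G$ over the $i$-th inter-visit block; by the strong Markov property the $\Delta_i$ are i.i.d., with $|\Delta_i|=j e^{cj}+O(1)$ with probability $p_j$. A direct estimate gives $\Prob(|\Delta_1|\ge x)\asymp (x^2\log x)^{-1}$, so $\Delta_1$ has mean zero, \emph{infinite} variance, and a truncated second moment $\Expect[\Delta_1^2\,\ind\{|\Delta_1|\le x\}]\asymp\log\log x$ that is slowly varying and diverges. Hence $\Delta_1$ lies in the domain of attraction of the normal law with norming constants of order $\sqrt{n\log\log n}$, so $(n\log\log n)^{-1/2}\sum_{i=1}^n\Delta_i$ converges to a nondegenerate Gaussian. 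Since the number $K_n$ of visits to $0$ up to time $n$ satisfies $K_n/n\to 1/\Expect_0[\tau_0]$, and the length-biased incomplete block straddling time $n$ contributes a quantity with tail $\asymp x^{-2}$, hence $o_P(\sqrt{n\log\log n})$, an Anscombe-type argument yields $(n\log\log n)^{-1/2}\sum_{i=0}^{n-1}G(X(i))\Rightarrow$ nondegenerate Gaussian. Therefore $n^{-1/2}\sum_{i=0}^{n-1}G(X(i))=\sqrt{\log\log n}\cdot(n\log\log n)^{-1/2}\sum_{i=0}^{n-1}G(X(i))\to\infty$ in probability; it is not tight, so it converges neither to a normal distribution nor to a point mass, which is the assertion (the stationary start is handled identically, the extra initial block being negligible on the same scale).

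The principal obstacle is the joint calibration. Geometric ergodicity forces long arms to be used only with exponentially small probability, and $G\in L_2$ forces $\sum_j p_j\, j\, a_j^2<\infty$ where $a_j$ is the size of $|G|$ on arm $j$; yet one must still arrange $\mathrm{Var}(\Delta_1)=\sum_j p_j\, j^2 a_j^2=\infty$, which is possible only because $G$ is unbounded, so that a short arm can nonetheless accumulate a large additive increment. The choice $a_j=e^{cj}$, $p_j\asymp j^{-3}e^{-2cj}$, arm length $j$, threads this needle and unavoidably places the tail of $\Delta_1$ in the critical regime $x^{-2}$ up to a slowly varying factor --- which is precisely why the correct normalization is $\sqrt{n\log\log n}$ rather than $\sqrt n$. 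Verifying that the slowly varying truncated variance genuinely grows and that the incomplete boundary block and the fluctuations of $K_n$ are negligible at the scale $\sqrt{n\log\log n}$ are the remaining technical points, all routine once the parameters are fixed as above.
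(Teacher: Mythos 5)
Your construction is correct, but note that the paper does not actually prove this proposition: it simply cites H\"aggstr\"om's Theorem~1.3 in \cite{hag05a} (and Bradley \cite{bra83}) and, later, uses his Lemma~3.2 to extract non-tightness of \eqref{e:sqrtnSn}. What you have done is reconstruct, self-contained, a counterexample of exactly the type H\"aggstr\"om builds --- a star-shaped chain with an atom at $0$, arms of length $j$ entered with exponentially small probability, and an unbounded $G$ calibrated so that $\pi(G^2)<\infty$ while the i.i.d.\ excursion sums $\Delta_i$ have infinite variance. Your key computations check out: with $p_j\asymp j^{-3}e^{-2cj}$ and $|G|\equiv e^{cj}$ on arm $j$ (constant sign per arm, so the block sum does not cancel internally), one gets $\Expect_0[e^{\theta\tau_0}]<\infty$ for $\theta<2c$ (hence geometric ergodicity, since $\{0\}$ is an accessible atom), $\pi(G^2)\asymp\sum_j j^{-2}<\infty$, $\Prob(|\Delta_1|\ge x)\asymp (x^2\log x)^{-1}$, and truncated variance $\asymp\log\log x$, which is slowly varying and divergent; the Feller criterion $x^2\Prob(|\Delta_1|>x)/L(x)\to 0$ then places $\Delta_1$ in the domain of attraction of the normal with norming $\sqrt{n\log\log n}$, and the Anscombe/boundary-block estimates are routine as you say. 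The upshot is that $n^{-1/2}\sum_{i<n}G(X(i))$ blows up in probability, which is in fact a \emph{stronger} and more explicit conclusion (non-tightness with an identified second-order limit law) than the bare statement of the proposition, and it is precisely the non-tightness that the paper needs later in the proof of Theorem~\ref{t:geoNotL2spectrum}. What the paper's citation buys is brevity; what your version buys is a verifiable, quantitative example and a transparent view of the calibration constraint you correctly isolate ($\sum_j p_j j a_j^2<\infty$ versus $\sum_j p_j j^2 a_j^2=\infty$), which explains why such examples must use unbounded $G$ and must be non-reversible.
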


The result of the proposition
appears in \cite[Theorem~1.3]{hag05a},
and an earlier counterexample in \cite{bra83} 
yields the same conclusion.  Based on these 
counterexamples we now show that geometric 
ergodicity does not imply a spectral gap 
in the Hilbert space setting.

\medskip

\noindent
{\em Proof of Theorem~\ref{t:geoNotL2spectrum}. }
Suppose that the Markov chain $\Xp=\{X(n)\}$
constructed in Proposition~\ref{t:Olle}
does admit a spectral gap in $L_2$.   
Then its autocorrelation fuction
decays geometrically fast, 
for any $h\in L_2$: 
Assuming without loss of generality that $\pi(h)=0$, 
and letting $R_h(n) =\pi(h P^n h)$, for all $n$,
we have the bound,
\[
|R(n) |  \le \sqrt{\pi(h^2) \pi((P^n h)^2)} ,\qquad n\ge 1.
\]
Applying \Theorem{L2givesV}, we conclude that the 
right-hand-side decays geometrically fast as $n\to\infty$.  
Consequently, the sequence of normalized sums,
\[
S_n\eqdef
\frac{1}{\sqrt{n}} \sum_{i=0}^{n-1} h(X(i)),\qquad n\ge 1,
\]
is uniformly bounded in $L_2$, i.e.,
\[
\limsup_{n\to\infty} \Expect_\pi[S_n^2]  \le \sum_{n=-\infty}^\infty |R(n)|,
\]
where $\Expect_\pi[\cdot]$ denotes the expectation operator 
corresponing to the stationary version of the chain.
However, this is impossible for the choice of the function
$h=G$ as in Proposition~\ref{t:Olle}:
In \cite[p.~81]{hag05a} it is shown that the
corresponding normalized sums in \eqref{e:sqrtnSn} 
fail to define a tight sequence of probability 
distributions.  This is a consequence 
of \cite[Lemma~3.2]{hag05a}.

This contradiction establishes the claim that the Markov 
chain of \Proposition{Olle} cannot admit a spectral gap 
in $L_2$.   
\qed

\medskip

Finally we prove Propositions~\ref{t:equiv:V}
and~\ref{t:equiv:2}.

\medskip

\noindent
{\em Proof of Proposition~\ref{t:equiv:V}. }
The equivalence stated in the proposition
is obtained on combining  \Lemma{WhereIsS} with  
\cite[Proposition~4.6]{kontoyiannis-meyn:I}. 
To explain this,
we introduce new terminology:  
The transition kernel is called \textit{$V$-uniform} 
if $\lambda = 1$ is  the only pole on the unit circle in $\Co$,  
and this pole  has multiplicity one.   
Proposition~4.6 of \cite{kontoyiannis-meyn:I} states 
that geometric ergodicity with resepct to a Lyapunov
function $V$ is equivalent to $V$-uniformity  
of the kernel $P$. Consequently, the direct
part of the proposition holds,
since $V$-uniformity of $P$ implies that
it admits a spectral gap in $\LV$.
 
Conversely, if the chain admits a spectral gap in $\LV$,  
then \Lemma{WhereIsS} states that $P$ is $V$-uniform.   
Applying Proposition~4.6 of \cite{kontoyiannis-meyn:I} 
once more, we conclude that the chain is geometrically
ergodic with the same Lyapunov function $V$.
\qed

\medskip

\noindent
{\em Proof of Proposition~\ref{t:equiv:2}. }
The forward direction of the statement of the
proposition is contained in 
\cite{roberts-rosenthal:97} and
\cite{roberts-tweedie:01}.

The converse again follows from   
\Lemma{WhereIsS} and a minor modification of the arguments used in  
\cite[Proposition~4.6]{kontoyiannis-meyn:I}.   
If the chain admits a spectral gap in $L_2$, then the 
lemma states that $\lambda = 1$ has multiplicity one, 
and that this is  the only pole on the unit circle in $\Co$.  
It follows that for some $\rho<1$, the inverse 
$[zI-(P-\one\otimes\pi)]^{-1}$ exists as a bounded linear 
operator on $L_2$, whenever $|z|\ge \rho$.  Denote  
$b_\rho=\sup \lll [zI-(P-\one\otimes\pi)]^{-1}\lll_2 :  
|z|=\rho\}$, where $\lll\varble \lll_2$ is the induced 
operator norm on $L_2$. 

Following the proof of \cite[Theorem~4.1]{kontoyiannis-meyn:I}, 
we conclude that finiteness of $b_\rho$ implies a form 
of geometric ergodicity:  For any $g\in L_2$,
\[
\frac{1}{2\pi}    \int_0^{2\pi} e^{in\phi} 
[ \rho e^{in\phi}I-(P-\one\otimes\pi)]^{-1} g   
=  \rho^{-n-1}(P^n g - \pi(g)).
\]
Therefore, the $L_2$-norm of the left-hand-side 
is bounded by $ b_\rho \| g\|_2$.    
This gives,
\[
\|P^n g - \pi(g) \| _2 \le b_\rho  \| g\|_2\rho^{n+1},\qquad n\ge 1.
\]
It follows from   \cite[Theorem 15.4.3]{meyn-tweedie:book2} that the 
Markov chain is geometrically ergodic.
\qed

\bibliographystyle{plain}
%%%%%%%%%%%%%%%%%%%%%%%%%%%%%%%%%%%%%%%%%%%%%%%%%%%%%%%%%%%%
% \bibliography{strings,markov}
% \bibliography{../../latex/ik}

\end{document}